\newtheorem{thm}{Theorem}[section]
\newtheorem{lemma}[thm]{Lemma}
\newtheorem{remark}[thm]{Remark}
\def\ll{L^{\infty}(\Omega)}
\def\lll {L^2_{\diamond}(\partial \Omega)}
\def\llll{\mathcal{L}(L^2_{\diamond}(\partial \Omega))}
\def\ho{H^1_{\diamond}(\Omega)}
\newcommand{\R}{\ensuremath{\mathbbm{R}}}
\newcommand{\kommentar}[1]{}
\newcommand\copyrighttext{%
  \footnotesize 
  This is an author-created, un-copyedited version of an article accepted for publication in Inverse Problems.
  The publisher is not responsible for any errors or omissions in this version of the manuscript or any version derived from it.}
\newcommand\copyrightnotice{%
\begin{tikzpicture}[remember picture,overlay]
\node[anchor=south,yshift=680pt] at (current page.south) {\fbox{\parbox{\dimexpr\textwidth-\fboxsep-\fboxrule\relax}{\copyrighttext}}};
\end{tikzpicture}%
}
\begin{document}

\copyrightnotice

\title[Enhancing residual-based techniques]{Enhancing residual-based techniques with shape reconstruction features in Electrical Impedance Tomography}

\author{Bastian Harrach${}^\dag$ and Mach Nguyet Minh${}^\dag$}
\address{\dag\ Department of Mathematics,
Goethe University Frankfurt, Germany}
\ead{\mailto{harrach@math.uni-frankfurt.de}, \mailto{mach@math.uni-frankfurt.de}}

\begin{abstract} In electrical impedance tomography, algorithms based on minimizing a linearized residual functional have been widely used due to their flexibility and good performance in practice. However, no rigorous convergence results are available in the literature yet, and reconstructions tend to contain ringing artifacts. In this work, we shall minimize the linearized residual functional under a linear constraint defined by a monotonicity test, which plays the role of special regularizer. Global convergence is then established to guarantee that this method is stable under the effects of noise. Moreover, numerical results show that this method yields good shape reconstructions under high levels of noise without the appearance of artifacts.
\end{abstract}


\ams{
35R30, 
35J25 
}



\section{Introduction}
\label{Sec:intro}
Electrical impedance tomography (EIT) is a non-invasive imaging technique which aims at reconstructing the inner structure of a reference subject from the knowledge of the current and voltage measurements on the boundary of the subject. Typically, an array of electrodes are attached to the boundary of the reference subject, then small currents are applied to some or all of the electrodes and the resulting electric voltages are measured at the electrodes. These current and voltage measurements on the boundary of the reference subject are used to estimate the value of the conductivity inside the subject. The result is an image of the inner structure of the subject due to the fact that different materials have different conductivities. Compared with computerized X-ray tomography, EIT is less costly and requires no ionizing radiation; hence, it qualifies for many clinical applications including lung ventilation (e.g. \cite{frerichs2000electrical}), brain imaging (e.g. \cite{tidswell2001three}), breast cancer diagnosis (e.g. \cite{cherepenin20013d}), etc. On the other hand, EIT can also be used for nonclinical purposes such as determining the location of mineral deposits (e.g. \cite{parker1984inverse}), describing soil structure (e.g. \cite{zhou2001three}), identifying cracks in non-destructive testing (e.g. \cite{lazarovitch2002experimental}) and so on.

The inverse problem of reconstructing the conductivity from voltage-current-measurements is known to be highly ill-posed and nonlinear, and reconstructions suffer from an enormous sensitivity to modeling and measurement errors. To reduce modeling errors, one usually concentrates on reconstructing a conductivity change with respect to a known reference conductivity. Then, the most natural approach is to parametrize the support of the conductivity change and determine these parameters by an iterative, nonlinear inverse problems solver. Such iterative methods yield good reconstructions for a given good initial guess; however, they require expensive computation and have no convergence results. Non-iterative methods such as the Factorization Method (\cite{scherzer2011handbook,harrach2013recent}) and the Monotonicity-based Method (\cite{tamburrino2002new,tamburrino2006monotonicity,harrach2013monotonicity,aykroyd2006conditional}), on the other hand, are rigorously justified and require no initial guess. However, the reconstructions of both Factorization Method and Monotonicity-based Method tend to be rather sensitive to measurement errors when phantom data or real data are applied \cite{azzouz2007factorization,harrach2010factorization,zhou2015monotonicity,garde2015convergence}.

In clinical studies, it is common practice to start by linearizing the EIT problem around a known reference conductivity and minimizing
the residual functional of the linearized equation (herein called linearized residual functional for the sake of brevity). The resulting problem can be regularized in various ways. For example, \cite{cheney1990noser,adler2009greit}  considered the minimization problem of a linearized residual functional with a regularization term, which is similar to the standard Tikhonov regularization method. The algorithm proposed in \cite{cheney1990noser,adler2009greit} is fast and simple. However, no convergence proofs are available so far.  {{Besides, one can also use sparsity reconstruction \cite{jin2012reconstruction, gehre2012sparsity,jin2012sparsity}. This is an effective method to detect piecewise constant inclusions. However, strictly speaking, convergence to the true conductivity is still an open issue, as it is not clear how to obtain a global minimizer.
		}}

Our new method is also based on minimizing the linearized residual functional. However, instead of adding a regularization term, we employ a linear constraint defined by the monotonicity test \cite[Theorem 4.1]{harrach2013monotonicity} which plays the role of a special regularizer. Global convergence of the shape reconstructions is then proved and numerical results show that this method provides good shape reconstructions even under high levels of noise. To the authors' knowledge, this is the first reconstruction method based on minimizing the residual that has a rigorous global convergence property. For the question of globally convergent algorithms for other classes of inverse problems, see for example \cite{thanh2014reconstruction}.

The paper is organized as follows. Section \ref{Sec:pre} presents some preliminaries and notations. In section \ref{Sec:theo}, we state and prove our theoretical results. Section \ref{Sec:num} shows the numerical experiments and we conclude with a brief discussion in section \ref{Sec:con}.

\section{Preliminaries and notations}
\label{Sec:pre}

We consider a bounded domain $\Omega$ in $\mathbb{R}^n$ {{(i.e. $\Omega$ is a bounded open connected subset of $\mathbb{R}^n$)}}, $n \geq 2,$ with smooth boundary $\partial \Omega$ and outer normal vector $\nu$. We assume that $\Omega$ is isotropic so that the electric conductivity $\sigma: \Omega \to \mathbb{R}$ is a scalar function, and that $\sigma$ is bounded and strictly positive. Inside $\Omega$, the electric potential $u:\Omega \to \mathbb{R}$ is governed by the so-called {\it conductivity equation}. On the boundary of $\Omega$, $u$ satisfies the {\it Neumann condition}.
\begin{eqnarray}\label{eqn:conductivity}
\nabla \cdot \sigma  \nabla u = 0 \; \mbox{ in } \Omega, \quad \sigma \partial_{\nu} u = g \; \mbox{ on } \partial \Omega.
\end{eqnarray}
In this work, we shall follow the {\it Continuum Model} (e.g. \cite[Subsection 12.3]{MuSa12}), which assumes that there are no electrodes and that the current density $g: \partial \Omega \to \mathbb{R}$ is applied over all $\partial \Omega$. The electric voltage, in this case, can be measured at every point of the boundary $\partial \Omega$ and is denoted by  $u|_{\partial \Omega}$.

The forward problem of (\ref{eqn:conductivity}) is to determine the potential $u$ for given data $\sigma$ and $g$. The existence of a variational solution $u \in H^1_{\diamond}(\Omega)$ for this Neumann boundary value problem is obtained due to the Lax-Milgram Theorem, while the uniqueness (up to an additive constant) is straight-forward. The forward problem of (\ref{eqn:conductivity}) is well-posed in the sense that the potential $u$ depends continuously on the Neumann data $g$. To guarantee that $u$ is uniquely defined, one would require furthermore that the solution $u$ has zero integral mean, i.e. $\int_{\partial \Omega} u \, {\rm d}s = 0.$

The unique solvability of the forward problem (\ref{eqn:conductivity}) guarantees the existence of the Neumann-to-Dirichlet (NtD) operator $\Lambda(\sigma)$, which maps each current $g$ to the voltage measurement $u^{\sigma}_g|_{\partial \Omega}$ on the boundary:
\[\mbox{ For each } \; \sigma \in L^{\infty}_+(\Omega), \;\Lambda(\sigma)\;:\; g \in L^2_\diamond(\partial\Omega)\to u^{\sigma}_g\in L^2_\diamond(\partial\Omega).\]
Here, $u^{\sigma}_g \in H^1_{\diamond}(\Omega)$ is the unique variational solution of the forward problem (\ref{eqn:conductivity}) corresponding to the conductivity $\sigma$ and the boundary current $g$, and $u^{\sigma}_g|_{\partial \Omega}$ is understood as the trace of $u^{\sigma}_g$ on the boundary $\partial \Omega$. $L^{\infty}_+(\Omega)$ is the subspace of $\ll$ with positive essential infima. $H^1_{\diamond}(\Omega)$ and $\lll$ denote the spaces of $H^1$- and $L^2$-functions with vanishing integral mean on $\partial \Omega$.

It is well-known that $\Lambda(\sigma)$ is a linear, bounded, compact, self-adjoint, positive operator from $\lll$ to $\lll$ (see for example \cite[Chapter 5]{Kirsch_book11} for two-dimensional cases). For each $g\in \lll$ the quadratic form associated with $\Lambda(\sigma)$ is:
\[\left<g, \Lambda(\sigma)g\right> = \int_{\partial \Omega} g\Lambda(\sigma) g\; {\textrm d}s = \int_{\Omega} \sigma |\nabla u^{\sigma}_g|^2 \; {\textrm d}x.\]
The existence of the Fr\'echet derivative $\Lambda^{\prime}$ of the NtD operator $\Lambda$ can be found for example in \cite{Lec08}. Given some direction $\kappa \in L^{\infty}(\Omega)$, the derivative $\Lambda'(\sigma)\kappa \in \llll$ is associated with the quadratic form:
\[\left<g,(\Lambda'(\sigma)\kappa)g\right>= - \int_{\Omega} \kappa |\nabla u^{\sigma}_g|^2 \; {\textrm d}x.\]

The inverse problem of (\ref{eqn:conductivity}) is to determine the conductivity $\sigma$ from the knowledge of the NtD operator $\Lambda(\sigma)$. Obviously, $\Lambda(\sigma)$ depends on $\sigma$ nonlinearly, and like many other nonlinear inverse problems, this is an ill-posed problem. In fact, it is known that small amounts of noise or model errors may cause poor spatial resolution. The reader is referred to Mueller and Siltanen's book \cite{MuSa12} for further explanation about the nonlinearity and the ill-posedness of this inverse problem.

The uniqueness of solutions of the inverse problem (\ref{eqn:conductivity}) has been investigated for different classes of conductivities and dimensions immediately following Calderon's pioneer paper \cite{cal80} in 1980, for example, Kohn and Vogelius \cite{Koh84} for piecewise analytic conductivities, Sylvester and Uhlmann \cite{Syl87} for $C^2$-conductivities in dimension $n \ge 3$, Nachman \cite{Nac96} for $W^{2,p}$-conductivities in dimension $n=2$, Astala and P\"aiv\"arinta \cite{astala2006calderon} for $L^{\infty}$-conductivities in dimension $n=2$. 

In the next section, we shall propose a regularization scheme to construct an approximate solution and prove stability in the presence of noise.



\section{Theoretical results}
\label{Sec:theo}

In this work, we need the definiteness assumption, i.e. either $\sigma\ge \sigma_0$ a.e. $\Omega$ or $\sigma\le\sigma_0$ a.e. $\Omega$. For the sake of simplicity, we shall assume that the background conductivity $\sigma_0\equiv 1$ and that the conductivity of the investigated subject is defined by ${\sigma} := 1 + \gamma\chi_D$, where the open set $D$ denotes the unknown inclusions. We assume furthermore that $\gamma \in L^{\infty}_+(D)$ and that $\overline{D} \subset \Omega$ has a connected complement. The goal of EIT is to determine the inclusions' shape $D$ from the knowledge of the NtD operators $\Lambda(\sigma)$ and $\Lambda(1)$. Notice that our method also works for inhomogeneous background conductivity.


\subsection{Exact data}
\label{subsec:exact}

We start by describing our method for exact data. The idea of this method is inspired by a result of Seo and one of the authors in \cite{harrach2010exact}. It is proved that, if $\kappa$ is an exact solution of the linearized equation
\[\Lambda'(1)\kappa = \Lambda(\sigma) -\Lambda(1)\]
then the support of $\kappa$ coincides with $D$. However, it is not clear in general whether such an exact solution exists. In addition, one cannot get a similar result for noisy data. It is natural to ask whether minimizing the residual functional 
\[r(\kappa):= \Lambda({\sigma})-\Lambda(1)-\Lambda'(1)\kappa\]
under appropriate regularization can yield a solution $\kappa$ with correct support.  In this work, we can prove that this is indeed possible. More precisely, denote by $\{\bar{g}_1,\dots,\bar{g}_N\}$ the $N$ given injected currents which are assumed to form an orthonormal subset of $\lll$, we can replace $r(\kappa)$ by the matrix $\left(\left<\bar{g}_i, r(\kappa)\bar{g}_j\right>\right)_{i,j=1}^N$ and minimize this $N$-by-$N$ matrix under the Frobenius norm: 
\begin{eqnarray}\label{29.10.1}
\min_{\kappa \in \mathcal{A} \subset L^{\infty}(\Omega)} \|{\rm \bf R}(\kappa)\|_F,
\end{eqnarray}
where ${\rm \bf R}(\kappa)$ stands for the matrix $\left(\left<\bar{g}_i, r(\kappa)\bar{g}_j\right>\right)_{i,j=1}^N$, $\|\cdot\|_F$ denotes the Frobenius norm and $\mathcal{A}$ is an admissible set of conductivity change $\kappa$. Since there is no hope to reconstruct the conductivity change at every point inside $\Omega$ from the knowledge of a finite number of measurements, it is reasonable to restrict $\mathcal{A}$ to the class of piecewise constant functions. More precisely, the admissible conductivity change $\kappa$ is assumed to be constant on a fixed partition $\{P_k\}_{k=1}^P$ of the bounded domain $\Omega$:
\[\kappa(x) = \sum_{k=1}^P a_k \chi_{P_k}(x),\]
where the $a_k$'s are real constants and each $P_k$ is assumed to be open, $\cup_{k=1}^P \overline{P}_k = \overline{\Omega}$, $\Omega \setminus P_k$ is connected and $P_k \cap P_l = \emptyset$ for $k \ne l$. Notice that, this partition is not unique, and any minimizer ${\bar{\kappa}}$ of (\ref{29.10.1}) as well as any reconstruction shape (that is, the support of some minimizer $\bar{\kappa}$) depend heavily on the choice of this partition. 

\begin{remark}
	
It is well-known that $r(\kappa)$ is a linear, bounded, compact, self-adjoint operator from $\lll$ to $\lll$. Perhaps a reasonable choice of an appropriate norm to minimize $r(\kappa)$ is the operator norm. In fact, all of the following theoretical results remain true for the operator norm. The numerical results for the operator norm can be easily obtained by considering the equivalent problem of minimizing the maximum eigenvalue of an approximate matrix of $r(\kappa)$. 

Another commonly used norm is the Hilbert-Schmidt norm. Nevertheless, we could not minimize $r(\kappa)$ under the Hilbert-Schmidt norm, since it is not clear whether or not $r(\kappa)$ belongs to the class of Hilbert-Schmidt operators. 

The idea of using the Frobenius norm comes from the fact that in realistic models, one always applies a finite number of currents $g$ on the boundary; and hence, only a finite number of measurements $\Lambda(\sigma)g$ are known.

\end{remark}

Problem (\ref{29.10.1}) was actually considered decades ago in clinical EIT such as \cite{cheney1990noser}. Typically, one usually adds a regularization term into the minimization functional, similar to the standard Tikhonov regularization method. By this method, good shape reconstructions with real-time implementation can be obtained. However, no rigorous convergence results have been established so far, and the reconstructions usually contain ringing artifacts.

In the present paper, we do not follow the standard Tikhonow regularization method. Instead, we use a linear constraint defined by the monotonicity test \cite{harrach2013monotonicity} as a special regularizer. 

\subsection*{A linear constraint defined by the monotonicity test}

A lower bound for an admissible conductivity change $\kappa$ is, in fact, due to the fact that $\sigma\ge 1$. An upper bound for $\kappa$, on the contrary, is numerically defined by the idea of the monotonicity test \cite{harrach2013monotonicity} as follows:

\cite[Example 4.4]{harrach2013monotonicity} has proved that, for $\tilde{\sigma}=1+\chi_{{D}}$ and for every ball $B$
\[B \subseteq {D} \quad \mbox{ if and only if } \quad \Lambda(1)+ \frac{1}{2}\Lambda'(1)\chi_B \ge \Lambda(\tilde{\sigma}).\]
We will show in the proof of Lemma \ref{le:position} that for any real constant ${\it a}$ satisfying $0<{\it a}\le1-\frac{1}{1+\inf_D \gamma}$, it holds that
\begin{eqnarray*}\label{28.10.1} 
   \fl \mbox{(i)\,  If } P_k \subseteq D \mbox{ then }\Lambda(1)-\Lambda(\sigma)+\alpha \Lambda'(1)\chi_{P_k}\geq 0\; \mbox{ in quadratic sense for (at least) all } \alpha\in [0,{\it a}]. \hfill\\
  \fl \mbox{(ii) If } P_k \nsubseteq D \mbox{ then } \Lambda(1)-\Lambda(\sigma)+\alpha \Lambda'(1)\chi_{P_k}\not\geq 0\; \mbox{  in quadratic sense for all } \alpha> 0. \hfill
\end{eqnarray*}
We see that, when $\gamma \equiv 1$, we have ${\rm \it a}=\frac{1}{2}$ and in this case, $\rm \it a$ is actually the number $\frac{1}{2}$ in \cite[Example 4.4]{harrach2013monotonicity}. Although the formula of $\rm \it a$ depends heavily on the inclusion contrast $\gamma$, in many applications a bound for  $\gamma$ is known a-priori.

For each pixel $P_k$, the biggest coefficient $\beta_k$ such that
\begin{eqnarray}\label{ineq:1}
\Lambda(1) - \Lambda(\sigma) + \alpha \Lambda^{\prime}(1) \chi_{P_k} \ge 0  \quad \forall \alpha \in [0,\beta_k]
\end{eqnarray}
then satisfies: 
\begin{itemize}
\item $\beta_k \ge {\it a} >0$ if $P_k \subseteq D$.
\item $\beta_k=0$ if $P_k \nsubseteq D$.
\end{itemize}
This motivates the constraint $0 \le \kappa \le \beta_k$ on each pixel $P_k$. Note that $\beta_k$ is allowed to be $\infty$. Our following theory, therefore, requires us to use a stronger upper bound $\min({\it a},\beta_k)$ where $\rm\it a$ plays the role of a special regularizer. Since ${\it a}$ is smaller than the true contrast $\gamma$, this seems ``over-constrained", but we will show that the minimizer of this over-constrained problem possesses the correct support. Thus, we can define the admissible set $\mathcal{A}$ as follows:
\[\mathcal{A}:=\left\{ \kappa \in L^{\infty}(\Omega)\;:\; \kappa = \sum_{k=1}^P a_k\chi_{P_k}, a_k \in \mathbb{R}, 0 \le a_k \le \min({\it a},\beta_k) \right\}.\]

Here comes our main result.

\begin{thm}\label{thm:main}
Consider the minimization problem
\begin{eqnarray}\label{5.11.15.1}
\min_{\kappa \in \mathcal{A}} \|{\rm \bf R}(\kappa)\|_F.
\end{eqnarray}
The following statements hold true:
\begin{itemize}
\item[(i)] Problem (\ref{5.11.15.1}) admits a unique minimizer $\hat{\kappa}$.
\item[(ii)] ${\rm supp}\, \hat{\kappa}$ and $D$ agree up to the pixel partition, i.e. for any pixel $P_k$ 
\[P_k \subset {\rm supp}\,\hat{\kappa} \; \mbox{  if and only if  } \; P_k \subset D.\] 
Moreover, $\hat{\kappa} = \sum_{k=1}^P \min ({\it a},\beta_k) \chi_{P_k}$.
\end{itemize}
\end{thm}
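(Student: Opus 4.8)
The plan is to recast the minimization in operator form, exploit two monotonicity principles together with an elementary monotonicity of the Frobenius norm, and then read off the support. Write $r(\kappa)=\Lambda(\sigma)-\Lambda(1)-\Lambda'(1)\kappa$, a bounded self-adjoint operator on $\lll$, and let $Q$ be the orthogonal projection of $\lll$ onto $V:=\mathrm{span}\{\bar g_1,\dots,\bar g_N\}$. Since $Q\bar g_i=\bar g_i$, the matrix ${\rm \bf R}(\kappa)=(\langle\bar g_i,r(\kappa)\bar g_j\rangle)_{i,j}$ is precisely the matrix of the compression $Q\,r(\kappa)\,Q|_V$ in the orthonormal basis $\{\bar g_i\}$, so $\|{\rm \bf R}(\kappa)\|_F$ equals the Hilbert--Schmidt norm of that compression. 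In particular, any operator inequality $r(\kappa_1)\le r(\kappa_2)$ on $\lll$ descends to $Q r(\kappa_1)Q\le Q r(\kappa_2)Q$ and hence to the associated matrices.

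\emph{Two monotonicities.} First, I claim $-r(\kappa)\ge0$ for every $\kappa\in\mathcal A$. Indeed, the estimate that drives the proof of Lemma~\ref{le:position} — which combines the refined monotonicity inequality with $\sigma\ge1+(\inf_D\gamma)\chi_D$ and the admissible range of $a$ — reads $\langle g,(\Lambda(1)-\Lambda(\sigma))g\rangle\ge a\int_D|\nabla u^1_g|^2\,{\rm d}x$ for all $g\in\lll$. Every $\kappa\in\mathcal A$ satisfies $0\le\kappa\le a\chi_D$, because $\beta_k=0$ (hence $a_k=0$) whenever $P_k\not\subseteq D$, while on the remaining pixels $a_k\le a$ and those pixels lie in $D$; therefore $\int_\Omega\kappa|\nabla u^1_g|^2\,{\rm d}x\le a\int_D|\nabla u^1_g|^2\,{\rm d}x\le\langle g,(\Lambda(1)-\Lambda(\sigma))g\rangle$, i.e.\ $\langle g,-r(\kappa)g\rangle\ge0$. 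Second, $\kappa\mapsto r(\kappa)$ is nondecreasing in the operator order: if $\kappa_1\le\kappa_2$ pointwise then $r(\kappa_2)-r(\kappa_1)=-\Lambda'(1)(\kappa_2-\kappa_1)\ge0$, since $\langle g,\Lambda'(1)\mu g\rangle=-\int_\Omega\mu|\nabla u^1_g|^2\,{\rm d}x\le0$ for $\mu\ge0$. Setting $\hat\kappa:=\sum_{k=1}^P\min(a,\beta_k)\chi_{P_k}$, the pointwise largest element of the box $\mathcal A$, we conclude $r(\kappa)\le r(\hat\kappa)\le0$ for all $\kappa\in\mathcal A$, and after compression $0\le-{\rm \bf R}(\hat\kappa)\le-{\rm \bf R}(\kappa)$.

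\emph{Minimality, uniqueness and support.} For real symmetric positive semidefinite matrices $0\le A\le B$ one has $\mathrm{tr}(B^2)-\mathrm{tr}(A^2)=2\,\mathrm{tr}\!\big(A(B-A)\big)+\mathrm{tr}\!\big((B-A)^2\big)\ge0$, so $\|A\|_F\le\|B\|_F$, with equality only if $B=A$. Applying this with $A=-{\rm \bf R}(\hat\kappa)$ and $B=-{\rm \bf R}(\kappa)$ shows that $\hat\kappa$ minimizes $\|{\rm \bf R}(\cdot)\|_F$ over $\mathcal A$, which yields existence in (i). If $\kappa^*\in\mathcal A$ is any minimizer, then equality forces ${\rm \bf R}(\hat\kappa)={\rm \bf R}(\kappa^*)$, that is $\int_\Omega(\hat\kappa-\kappa^*)\,\nabla u^1_{\bar g_i}\cdot\nabla u^1_{\bar g_j}\,{\rm d}x=0$ for all $i,j$; taking the diagonal entries gives $\sum_k(\min(a,\beta_k)-a_k^*)\int_{P_k}|\nabla u^1_{\bar g_i}|^2\,{\rm d}x=0$ with every summand nonnegative. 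Because $u^1_{\bar g_i}$ is harmonic in the connected domain $\Omega$ and nonconstant (otherwise $\bar g_i=0$, contradicting $\|\bar g_i\|=1$), unique continuation gives $\int_{P_k}|\nabla u^1_{\bar g_i}|^2\,{\rm d}x>0$ for every pixel, forcing $\kappa^*=\hat\kappa$; this proves uniqueness. Finally $\min(a,\beta_k)>0$ iff $\beta_k>0$ iff $P_k\subseteq D$, so $\mathrm{supp}\,\hat\kappa=\overline{\bigcup_{P_k\subseteq D}P_k}$; since distinct pixels are disjoint open sets, $P_k\subset\mathrm{supp}\,\hat\kappa$ exactly when $P_k\subseteq D$, and the displayed formula for $\hat\kappa$ is nothing but its definition, giving (ii).

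The only genuinely analytic ingredient — and the step I expect to be the main obstacle — is the refined monotonicity estimate $\langle g,(\Lambda(1)-\Lambda(\sigma))g\rangle\ge a\int_D|\nabla u^1_g|^2\,{\rm d}x$ underlying the first monotonicity; this is the substance of Lemma~\ref{le:position}. The remaining steps are formal, the only other nontrivial input being the unique-continuation fact $\int_{P_k}|\nabla u^1_{\bar g_i}|^2\,{\rm d}x>0$ used in the uniqueness argument.
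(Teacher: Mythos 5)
Your proof is correct, and the key step is genuinely different from the paper's. Both arguments rest on the same analytic inputs: the refined monotonicity estimate $\langle g,(\Lambda(1)-\Lambda(\sigma))g\rangle\ge a\int_D|\nabla u^0_g|^2$, which gives $r(\kappa)\le 0$ for every $\kappa\in\mathcal A$ (the paper's Step~1); Lemma~\ref{le:position} to know that $\beta_k=0$ off $D$ and $\beta_k\ge a$ inside $D$; and the non-vanishing of $\int_{P_k}|\nabla u^0_{\bar g_i}|^2$ via unique continuation (the content of Lemma~\ref{Sk.positive}). Where you diverge is in identifying the minimizer. The paper argues by contradiction: if a minimizer is not saturated on some pixel $P_k$, it adds $h\chi_{P_k}$, uses positive definiteness of ${\rm \bf S}_k$ together with Weyl's eigenvalue inequalities to show every eigenvalue strictly increases while all eigenvalues of both matrices remain nonpositive, and concludes that the Frobenius norm strictly decreases. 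You instead note that $\kappa\mapsto -{\rm \bf R}(\kappa)$ maps the box $\mathcal A$ order-reversingly into the positive semidefinite cone, that $\hat\kappa=\sum_k\min(a,\beta_k)\chi_{P_k}$ is the pointwise top of the box, and that the Frobenius norm is monotone on the cone ($0\le A\le B$ implies $\|A\|_F\le\|B\|_F$, with equality forcing $A=B$ via $\mathrm{tr}((B-A)^2)=0$); uniqueness then falls out of a single diagonal entry of ${\rm \bf R}(\hat\kappa)-{\rm \bf R}(\kappa^*)$. This is constructive rather than by contradiction, dispenses with Weyl's inequality, and makes the existence claim in (i) immediate without appealing to compactness of $\mathcal A$. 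Like the paper's argument, it works for any number $N$ of currents. The one point to keep explicit if you write this up is the elementary fact $\mathrm{tr}(AC)\ge 0$ for positive semidefinite $A,C$ (e.g.\ via $\mathrm{tr}(A^{1/2}CA^{1/2})\ge0$), which your trace identity silently uses.
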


\begin{remark}\label{re10.11.15} 
\begin{itemize}
\item[(i)] Notice that $\beta_k$ is defined via the infinite-dimensional NtD operator $\Lambda$ and does not involve the finite-dimensional matrix $\rm\bf R$.

{{\item[(ii)] Theorem \ref{thm:main} holds regardless the number $N$ of applied boundary currents. } }

\item[(iii)] We would like to emphasize that the goal of our method is to show an approximation of the size and the location of the inclusion $D$, not the value of the conductivity $\sigma$ inside $D$. Indeed, as we can see from the above theorem, the support of the unique minimizer $\hat{\kappa}$ agrees with $D$ up to pixels, while the value of $\hat{\kappa}$ is always smaller than $\sigma-\sigma_0$.
\item[(iv)] \cite[Theorem 4.3]{Lec08} has proved that $\Lambda'(1)$ is injective if there are enough boundary currents (that is, if $N$ is sufficiently large). In that case, it is obvious to see that ${\rm \bf R}(\kappa)$ is also injective. This fact together with the fact that the square Frobenius norm is strictly convex imply $\kappa \mapsto \|{\rm \bf R}(\kappa)\|^2_F$ is strictly convex.
\end{itemize}
\end{remark}

Before proving Theorem \ref{thm:main}, we need the following lemmas.
\begin{lemma} \label{le:position} For any pixel $P_k$, $P_k \subseteq D$ if and only if $\beta_k>0$.

\end{lemma}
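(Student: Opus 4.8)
The plan is to establish the two auxiliary facts (i)--(ii) announced right before the lemma, from which the statement is immediate: if $P_k\subseteq D$ then $\Lambda(1)-\Lambda(\sigma)+\alpha\Lambda'(1)\chi_{P_k}\ge 0$ (in the quadratic form sense) for every $\alpha\in[0,a]$ with $a:=1-\frac{1}{1+\inf_D\gamma}>0$, so the definition of $\beta_k$ in (\ref{ineq:1}) yields $\beta_k\ge a>0$; and if $P_k\not\subseteq D$ then that operator fails to be $\ge 0$ for every $\alpha>0$, forcing $\beta_k=0$. The workhorse will be the localized monotonicity estimate for the Neumann-to-Dirichlet map (see, e.g., \cite{harrach2013monotonicity}): for every $g\in\lll$, writing $u=u^1_g$ for the background potential,
\[
\int_D\frac{\gamma}{1+\gamma}\,|\nabla u|^2\,\mathrm{d}x\;\le\;\left<g,(\Lambda(1)-\Lambda(\sigma))g\right>\;\le\;\int_D\gamma\,|\nabla u|^2\,\mathrm{d}x .
\]
Both bounds follow from the Dirichlet variational principle; the sharp lower one also uses that the corrector $w:=u^1_g-u^\sigma_g$ solves $\int_\Omega\sigma\nabla w\cdot\nabla\phi=\int_D\gamma\nabla u\cdot\nabla\phi$ for all $\phi\in\ho$, so that Young's inequality gives $\int_\Omega\sigma|\nabla w|^2\le\int_D\frac{\gamma^2}{1+\gamma}|\nabla u|^2$. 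I also use the known formula $\left<g,(\Lambda'(1)\chi_{P_k})g\right>=-\int_{P_k}|\nabla u^1_g|^2\,\mathrm{d}x$.

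For the implication $P_k\subseteq D\Rightarrow\beta_k>0$ I would just combine these: for $\alpha\in[0,a]$ and any $g\in\lll$,
\[
\left<g,(\Lambda(1)-\Lambda(\sigma)+\alpha\Lambda'(1)\chi_{P_k})g\right>\;\ge\;\int_D\frac{\gamma}{1+\gamma}|\nabla u|^2\,\mathrm{d}x-\alpha\int_{P_k}|\nabla u|^2\,\mathrm{d}x .
\]
Since $t\mapsto t/(1+t)$ is increasing, $\frac{\gamma}{1+\gamma}\ge a$ a.e.\ on $D$; and since $P_k\subseteq D$ and the integrand is nonnegative, the first integral is $\ge a\int_{P_k}|\nabla u|^2\ge\alpha\int_{P_k}|\nabla u|^2$. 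Hence the right-hand side is $\ge 0$ for every such $\alpha$, so $\beta_k\ge a>0$.

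For the implication $P_k\not\subseteq D\Rightarrow\beta_k=0$ I would invoke localized potentials. Choose an open ball $O$ with $\overline O\subset P_k\setminus\overline D$, taken small enough that $\Omega\setminus(\overline O\cup\overline D)=(\Omega\setminus\overline D)\setminus\overline O$ is still connected (removing a small closed ball from a connected open set in dimension $n\ge2$ preserves connectedness). Applying the localized potentials technique (see \cite{harrach2013monotonicity} and the references therein) with background conductivity $1$ to the disjoint sets $O$ and $D$ produces $(g_m)\subset\lll$ with $\int_O|\nabla u^1_{g_m}|^2\,\mathrm{d}x\to\infty$ and $\int_D|\nabla u^1_{g_m}|^2\,\mathrm{d}x\to 0$. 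Then, for any fixed $\alpha>0$, the upper monotonicity estimate together with $O\subset P_k$ give
\[
\left<g_m,(\Lambda(1)-\Lambda(\sigma)+\alpha\Lambda'(1)\chi_{P_k})g_m\right>\;\le\;\|\gamma\|_{L^\infty(D)}\int_D|\nabla u^1_{g_m}|^2\,\mathrm{d}x-\alpha\int_O|\nabla u^1_{g_m}|^2\,\mathrm{d}x\;\longrightarrow\;-\infty,
\]
so for large $m$ the quadratic form is negative and the operator is not $\ge 0$. Since $\alpha>0$ was arbitrary, $\beta_k=0$, which finishes the proof.

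I expect the last implication to be the main obstacle: one must set up the localized potentials result cleanly, i.e.\ fit a ball into $P_k$ away from $\overline D$ and verify that the pertinent complement stays connected, which is precisely where the standing geometric hypotheses on $D$ and on the pixel partition enter (and where ``$P_k\not\subseteq D$'' is tacitly read as ``$P_k\not\subseteq\overline D$'', the borderline case $P_k\subseteq\overline D\setminus D$ being excluded under those hypotheses). The first implication is comparatively routine once the lower monotonicity estimate with the weight $\gamma/(1+\gamma)$ — the one genuinely nontrivial (but standard) variational ingredient, and the source of the constant $a$ — is in hand.
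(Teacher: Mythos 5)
Your proof is correct and follows essentially the same route as the paper: the forward direction rests on the same monotonicity lower bound $\left<g,(\Lambda(1)-\Lambda(\sigma))g\right>\ge\int_D\frac{\gamma}{1+\gamma}\,|\nabla u^0_g|^2\,\mathrm{d}x$ (the paper writes the weight as $1-1/\sigma$), and the converse combines the linearized monotonicity upper bound with localized potentials, exactly as in the paper's Step 2. The only cosmetic difference is that you concentrate the localized potential on a small ball in $P_k\setminus\overline D$ and spell out the connectedness hypotheses, whereas the paper blows up the energy on all of $P_k$ and leaves those geometric details implicit.
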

One special case of Lemma \ref{le:position} has been proved in \cite[Example 4.4]{harrach2013monotonicity}. We have slightly modified the proof there to fit with our notations and settings.


\begin{proof}[Proof of Lemma \ref{le:position}] {\bf Step 1:} We shall check that, $P_k \subseteq D$ implies $\beta_k>0$. Indeed, employing the following monotonicity principle (see e.g. \cite[Lemma 3.1]{harrach2013monotonicity})
\[\left< g, (\Lambda(\sigma_2)-\Lambda(\sigma_1))g \right> \ge \int_{\Omega} \frac{\sigma_2}{\sigma_1} (\sigma_1 - \sigma_2)|\nabla u_2|^2\,{\rm d}x\]
for $\sigma_1:=\sigma$ and $\sigma_2:=1$, we get the following inequalities for all pixels $P_k$, all $\alpha \in [0,{\it a}]$ and all $g\in \lll$:
\begin{eqnarray*} 
\fl \left< g, (\Lambda({\sigma}) - \Lambda(1)-\Lambda'(1)\alpha \chi_{P_k})g\right> \le -\int_{\Omega} \left(1-\frac{1}{{\sigma}} \right) |\nabla u^0_{g}|^2\,{\rm d}x +\int_{\Omega}\alpha\chi_{P_k}|\nabla u^0_g|^2 \le  \\
 \le -\int_{D} \left(1-\frac{1}{{\sigma}} \right) |\nabla u^0_{g}|^2\,{\rm d}x +\int_{P_k} {\it a} \,|\nabla u^0_g|^2\,{\rm d}x \le 0.  
\end{eqnarray*}
Here $u^0_g$ is the unique solution of the forward problem (\ref{eqn:conductivity}) when the conductivity is chosen to be $1$. The last inequality holds due to the fact that ${\it a}\le1-\frac{1}{1+\inf_D \gamma}\le 1-\frac{1}{\sigma}$ in $D$ and that $P_k$ lies inside $D$. 

{\bf Step 2:} If $\beta_k>0$, we will show that $P_k \subseteq D$ by contradiction: Assume that $\beta_k>0$ and $P_k \nsubseteq D$. Applying the following monotonicity principle \cite[Lemma 3.1]{harrach2013monotonicity}
\[\Lambda(\sigma_1)-\Lambda(\sigma_2) \ge \Lambda'(\sigma_2)(\sigma_1 - \sigma_2),\]
where we choose $\sigma_1:=\sigma, \sigma_2:=1$, and taking into account the definition of $\beta_k$ in (\ref{ineq:1}), we obtain
\begin{eqnarray*}
0\ge \Lambda({\sigma}) - \Lambda(1)- \Lambda'(1)\beta_k \chi_{P_k} \ge \Lambda'(1)({\sigma}-1)-\Lambda'(1)\beta_k\chi_{P_k}.
\end{eqnarray*}
Thus, for all $g\in \lll$:
\begin{eqnarray}\label{ineq:2}
\int_{P_k}\beta_k|\nabla u^0_g|^2\,{\rm d}x \le \int_{\Omega} ({\sigma}-1) |\nabla u^0_g|^2 \,{\rm d}x \le  \int_{D} C |\nabla u^0_g|^2 \,{\rm d}x
\end{eqnarray}
with some positive constant $C$.

On the other hand, applying localized potential (see e.g. \cite[Theorem 3.6]{harrach2013monotonicity} and \cite{Geb08} for the origin of this idea), we can find a sequence $\{g_m\} \subset \lll$ such that the solutions $\{u^0_m\} \subset \ho$ of the forward problem (\ref{eqn:conductivity}) (when the conductivity is chosen to be $1$ and boundary currents $g=g_m$)
fulfill
\[\lim_{m \to \infty}\int_{P_k} |\nabla u^0_m|^2\,{\rm d}x = \infty, \quad \mbox{ and } \quad \lim_{m \to \infty}\int_{D} |\nabla u^0_m|^2\,{\rm d}x =0.\]
This contradicts (\ref{ineq:2}). 
\end{proof}

\begin{lemma}\label{Sk.positive} For all pixels $P_k$, denote by ${\rm \bf S}_k$ the matrix $(-\left<\bar{g}_i,\Lambda^{\prime}(1)\chi_{P_k}\bar{g}_j\right>)_{i,j=1}^N$. Then ${\rm \bf S}_k$ is a positive definite matrix.
\end{lemma}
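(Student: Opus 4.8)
The plan is to recognise $\mathbf S_k$ as a Gram matrix and then to prove that the vector fields generating it are linearly independent. Recall from Section~\ref{Sec:pre} that the solution map $g\mapsto u^0_g\in\ho$ for the constant conductivity $1$ is \emph{linear} in the Neumann datum $g$, and recall the standard formula for the Fr\'echet derivative of the NtD operator (cf.\ \cite{Lec08}), of which the quadratic form quoted in Section~\ref{Sec:pre} is the diagonal: $\left<g,(\Lambda'(1)\chi_{P_k})h\right> = -\int_{P_k}\nabla u^0_g\cdot\nabla u^0_h\,{\rm d}x$ for all $g,h\in\lll$. Hence
\[
(\mathbf S_k)_{ij} \;=\; -\left<\bar g_i,\Lambda'(1)\chi_{P_k}\bar g_j\right> \;=\; \int_{P_k}\nabla u^0_{\bar g_i}\cdot\nabla u^0_{\bar g_j}\,{\rm d}x ,
\]
i.e.\ $\mathbf S_k$ is the Gram matrix in $L^2(P_k)^n$ of the fields $\nabla u^0_{\bar g_1}|_{P_k},\dots,\nabla u^0_{\bar g_N}|_{P_k}$; in particular it is symmetric. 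Consequently, for every $\xi=(\xi_1,\dots,\xi_N)^\top\in\mathbb R^N$, setting $g_\xi:=\sum_{i=1}^N\xi_i\bar g_i$ and using linearity of the solution map,
\[
\xi^\top\mathbf S_k\,\xi \;=\; \int_{P_k}\Big|\sum_{i=1}^N\xi_i\nabla u^0_{\bar g_i}\Big|^2\,{\rm d}x \;=\; \int_{P_k}|\nabla u^0_{g_\xi}|^2\,{\rm d}x \;\ge\;0,
\]
which already gives positive semidefiniteness.

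It then remains to rule out $\xi^\top\mathbf S_k\xi = 0$ for $\xi\neq 0$. Suppose $\int_{P_k}|\nabla u^0_{g_\xi}|^2\,{\rm d}x = 0$; then $\nabla u^0_{g_\xi}$ vanishes almost everywhere on the nonempty open set $P_k$. Since the conductivity is $1$, the function $u^0_{g_\xi}$ is harmonic in $\Omega$, hence real-analytic, and a harmonic function whose gradient vanishes on an open subset of the connected domain $\Omega$ is constant on all of $\Omega$ by unique continuation. Because $u^0_{g_\xi}\in\ho$ has vanishing integral mean on $\partial\Omega$, this constant is zero, so $g_\xi = \partial_\nu u^0_{g_\xi} = 0$ on $\partial\Omega$. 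As $\{\bar g_1,\dots,\bar g_N\}$ was assumed orthonormal in $\lll$, hence linearly independent, $g_\xi = 0$ forces $\xi = 0$, and $\mathbf S_k$ is positive definite.

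I expect the only delicate point to be the unique continuation step, namely that a function harmonic on the connected domain $\Omega$ and locally constant on a nonempty open subset is globally constant; the rest — linearity of $g\mapsto u^0_g$, the bilinear form of $\Lambda'(1)$, the Gram-matrix structure, and linear independence from orthonormality — is routine. It is worth noting that connectedness of $\Omega$, assumed in Section~\ref{Sec:pre}, is precisely what drives the argument, while the individual pixels $P_k$ are used only through the fact that they are open and nonempty.
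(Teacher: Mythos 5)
Your proof is correct and follows essentially the same route as the paper's: both express $\xi^\top\mathbf S_k\,\xi$ as $\int_{P_k}|\nabla u^0_{g_\xi}|^2\,{\rm d}x$ via the (bi)linear form of $\Lambda'(1)$ and then use unique continuation from the open set $P_k$ to conclude that a vanishing integral forces $u^0_{g_\xi}$ to be constant, hence $g_\xi=0$ and $\xi=0$. The only cosmetic differences are that you phrase the semidefiniteness via the Gram-matrix structure and invoke real-analyticity of harmonic functions directly, whereas the paper first passes through a Poincar\'e argument on a ball $B\subseteq P_k$ before applying unique continuation, and you make explicit the final linear-independence step that the paper leaves implicit.
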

 
 \begin{proof}[Proof of Lemma \ref{Sk.positive}] For all ${\rm x}=(x_1,\dots,x_N)^\top \in \mathbb{R}^N$, we have
\begin{eqnarray*}
\fl{\rm x}^{\top} {\rm \bf S}_k\, {\rm x} = -\sum_{i,j=1}^N x_ix_j \left<\bar{g}_i,\Lambda'(1)\chi_{P_k} \bar{g}_j \right> = -\left<\bar{g}, \Lambda'(1)\chi_{P_k} \bar{g}\right> = \int_{P_k}|\nabla u^0_{\bar{g}}|^2\;{\textrm d}x \ge 0,
\end{eqnarray*}
where $\bar{g}=\sum_{i=1}^N x_i\bar{g}_i$. This means that ${\rm \bf S}_k$ is a positive semi-definite symmetric matrix in $\mathbb{R}^{N \times N}$. We shall prove that ${\rm \bf S}_k$ is, in fact, a positive definite matrix by showing that
\begin{eqnarray}\label{6.11.15.1}\int_{P_k} |\nabla u^0_g|^2\;{\textrm d}x >0 \quad \mbox{ for all } \; g \in \lll,\; \|g\|_{\lll} \ne 0.
\end{eqnarray}
Assuming, by contradiction, that there exists $\tilde{g} \in \lll, \|\tilde{g}\|_{\lll} \ne 0$ such that 
\[\int_{P_k} |\nabla u^0_{\tilde{g}}|^2\;{\textrm d}x =0.\]
Since $P_k$ is open, there exists an open ball $B \subseteq P_k$. It holds that $u^0_{\tilde{g}}=c$ \; a.e. $B$, where $c$ is some real constant. This fact can be obtained in many different ways, for example, by the Poincar\'e's inequality.

On the other hand, $u(x)=c$ is a solution of the forward problem (\ref{eqn:conductivity}) with conductivity $1$ and homogeneous Neumann boundary data. Therefore, $u^0_{\tilde{g}}-c$ is a solution of (\ref{eqn:conductivity}) when the conductivity is $1$ and the Neumann boundary is $\tilde{g}$. Moreover, we have proved that $u^0_{\tilde{g}}-c=0$ \; a.e. in an open ball $B \subset \Omega$, the unique continuation principle implies that $u^0_{\tilde{g}}-c=0$ \; a.e. $\Omega$, and hence $\tilde{g}=0$ a.e. $\partial \Omega$. This contradiction implies that (\ref{6.11.15.1}) holds.
 \end{proof}

Now we are in the position to prove our main result.
\begin{proof}[Proof of Theorem \ref{thm:main}]
{\bf (i)} Existence of minimizer: Since the functional 
\[\kappa \mapsto \|{\rm \bf R}(\kappa)\|^2_F:= \sum_{i,j=1}^N \left<\bar{g}_i, r(\kappa)\bar{g}_j \right>^2\] 
is continuous, it admits a minimizer in the compact set $\mathcal{A}$. Uniqueness obviously follows when we have proven (ii).

{\bf (ii) Step 1:} Denote by $\chi_i:=\chi_{P_i}$. We shall check that, for all $\kappa=\sum_{k=1}^P \alpha_k \chi_k$ satisfying $0 \le \alpha_k \le \min ({\it a},\beta_k)$, it holds that $r(\kappa) \le 0$ in quadratic sense.

Notice that ${\it a} \le 1 - \frac{1}{\sigma}$ in $D$. In the same manner as the proof of Lemma \ref{le:position}, we can write
\begin{eqnarray*}
\left< g, (\Lambda({\sigma}) - \Lambda(1)-\Lambda'(1)\kappa)g\right> \le -\int_{D} {\it a} |\nabla u^0_{g}|^2\,{\rm d}x +\sum_{k=1}^P \int_{P_k}\alpha_k|\nabla u^0_g|^2\,{\rm d}x 
\end{eqnarray*}
for any $g \in \lll$. 

If $\alpha_k >0$, we have $\beta_k \ge \alpha_k >0$. By Lemma \ref{le:position}, it holds that $P_k \subseteq D$. Taking into account that  $\alpha_k \le {\it a}$ and that $P_i \cap P_j \ne \emptyset$ for $i \ne j$, we get that $\left<g,r(\kappa)g \right> \le 0$ for any $g \in \lll$.

{\bf Step 2:} Let $\hat{\kappa}=\sum_{k=1}^P \hat{\alpha}_k \chi_k$ be a minimizer of (\ref{5.11.15.1}). We prove that ${\rm supp}\hat{\kappa} \subseteq D$. 

Indeed, if $\hat{\alpha}_k>0$, it follows from Step $1$ and the monotonicity of $\Lambda'(1)$ that 
\[\Lambda({\sigma})-\Lambda(1)-\Lambda'(1){\hat{\alpha}_k}\chi_k \le \Lambda({\sigma})-\Lambda(1)-\Lambda'(1)\hat{\kappa} \le 0.\]
This implies $\beta_k>0$. Thanks to Lemma \ref{le:position}, we have $P_k \subseteq D$. 

{\bf Step 3:} If $\hat{\kappa}=\sum_{k=1}^P \hat{\alpha}_k \chi_k$ is a minimizer of (\ref{5.11.15.1}), then $\hat{\kappa}=\sum_{k=1}^P \min ({\it a},\beta_k) \chi_k.$

Indeed, it holds that $\hat{\kappa} \le \sum_{k=1}^P \min ({\it a},\beta_k) \chi_k$. If there exists a pixel $P_k$ such that $\hat{\kappa}(x) < \min\{ {\it a},\beta_k\}$ in $P_k$, we can choose $h>0$ such that $\hat{\kappa} + h \chi_k = \min ({\it a},\beta_k)  \mbox{ in } P_k.$
 We will show that
\begin{eqnarray}\label{7.11.15.11}
\|{\rm \bf R}(\hat{\kappa}+h\chi_{k})\|_F <  \|{\rm \bf R} (\hat{\kappa})\|_F
\end{eqnarray}
which contradicts the minimality of $\hat{\kappa}$. 

Let $\lambda_1(\hat{\kappa}) \ge \lambda_2(\hat{\kappa}) \ge \dots \ge \lambda_N(\hat{\kappa})$ be $N$ eigenvalues of ${\rm \bf R}(\hat{\kappa})$ and $\lambda_1(\hat{\kappa}+ h \chi_k) \ge \lambda_2(\hat{\kappa}+ h \chi_k) \ge \dots \ge \lambda_N(\hat{\kappa}+ h \chi_k)$ be $N$ eigenvalues of ${\rm \bf R}(\hat{\kappa}+ h \chi_k)$. Since ${\rm \bf R}(\hat{\kappa})$ and ${\rm \bf R}(\hat{\kappa}+ h \chi_k)$ are both symmetric, all of their eigenvalues are real. By the definition of the Frobenius norm, we get
\begin{eqnarray}\label{7.11.15.12}
&&\|{\rm \bf R}(\hat{\kappa} + h \chi_k)\|^2_F - \|{\rm \bf R}(\hat{\kappa})\|^2_F =\sum_{i=1}^N |\lambda_i(\hat{\kappa} + h \chi_k)|^2 - \sum_{i=1}^N |\lambda_i(\hat{\kappa})|^2= \nonumber\\
&&= \sum_{i=1}^N \left( \lambda_i(\hat{\kappa} + h \chi_k) + \lambda_i(\hat{\kappa})\right) \cdot \left( \lambda_i(\hat{\kappa} + h \chi_k) - \lambda_i(\hat{\kappa})\right). 
\end{eqnarray}
Thanks to Step 1, $r(\hat{\kappa}) \le 0$ and $r(\hat{\kappa} + h \chi_k) \le 0$ in the quadratic sense. Thus, for all ${\rm x}=(x_1,\dots,x_N)^\top \in \mathbb{R}^N$, we have
\begin{eqnarray*}
{\rm x}^{\top} {\rm \bf R}(\hat{\kappa})\, {\rm x} = \sum_{i,j=1}^N x_ix_j \left<\bar{g}_i,r(\hat{\kappa})\bar{g}_j \right> = \left<\bar{g},r(\hat{\kappa}) \bar{g}\right> \le 0,
\end{eqnarray*}
where $\bar{g}=\sum_{i=1}^N x_i\bar{g}_i$. This means that $-{\rm \bf R}(\hat{\kappa})$ is a positive semi-definite symmetric matrix in $\mathbb{R}^{N \times N}$. It is well-known that all eigenvalues of a positive semi-definite symmetric matrix should be non-negative. Thus,
\begin{eqnarray}\label{cond:1} \quad \lambda_i(\hat{\kappa}) \le 0  \quad \mbox{ for all }\quad i \in \{ 1,\dots, N\}.
\end{eqnarray}
In the same manner, $-{\rm \bf R}(\hat{\kappa}+h\chi_k)$ is also a positive semi-definite symmetric matrix. Hence,
\begin{eqnarray}\label{cond:2} \lambda_i(\hat{\kappa} + h \chi_k) \le 0 \quad \mbox{ for all }\quad i \in \{ 1,\dots, N\}.
\end{eqnarray}
On the other hand, Lemma \ref{Sk.positive} claims that ${\rm \bf S}_k$ is a positive definite matrix. Thus, all $N$ eigenvalues $\lambda_1({\rm \bf S}_k) \ge \dots \ge \lambda_N({\rm \bf S}_k)$ of ${\rm \bf S}_k$ are positive. Since
\[{\rm \bf R}(\hat{\kappa}+h\chi_{k}) = {\rm \bf R}(\hat{\kappa}) + h {\rm \bf S}_k.\]
and the matrices ${\rm \bf R}(\hat{\kappa}+h\chi_{k}), {\rm \bf R}(\hat{\kappa})$ and ${\rm \bf S}_k$ are all symmetric, we can apply Weyl's Inequalities \cite[Theorem III.2.1]{bhatia1997matrix} to get
\begin{eqnarray} \label{cond:3}
\lambda_i(\hat{\kappa}+h\chi_{k}) \ge \lambda_i(\hat{\kappa}) + h \lambda_N({\rm \bf S}_k) > \lambda_i(\hat{\kappa}) \quad \mbox{ for all } \quad i\in \{1,\dots,N\}. 
\end{eqnarray}
In summary, (\ref{7.11.15.12}), (\ref{cond:1}), (\ref{cond:2}) and (\ref{cond:3}) imply (\ref{7.11.15.11}). This ends the proof of Step 3.

{\bf Step 4:} If $P_k \subseteq D$, then $P_k \subseteq {\rm supp}\hat{\kappa}$. Indeed, since $\hat{\kappa}$ is a minimizer of (\ref{5.11.15.1}), Step 3 implies that  $\hat{\kappa}=\sum_{k=1}^P \min ({\it a},\beta_k) \chi_k.$ Since $P_k \subseteq D$, it follows from Lemma \ref{le:position} that $\min({\it a},\beta_k)>0$. Thus, $P_k \subseteq {\rm supp}\hat{\kappa}$.

In conclusion, problem (\ref{5.11.15.1}) admits a unique minimizer $\hat{\kappa} = \sum_{k=1}^P \min ({\it a},\beta_k) \chi_k$. This minimizer fulfills 
\begin{equation*}
\hat \kappa= \cases{ {\it a} \quad \mbox{ in } \; P_k, &if  $P_k$ lies inside $D$,\\
0  \quad \mbox{ in } \; P_k,&if  $P_k$ does not lie inside $D$.}
\end{equation*}
\end{proof}


\subsection{Convergence for noisy data}
\label{subsec:noisy}

In the presence of noise, we denote by $\delta$ the noise level. Similarly as above, we call ${\rm \bf R}^{\delta}(\kappa)$ the $N$-by-$N$ matrix $\left( \left< \bar{g}_i,r^{\delta}(\kappa)\bar{g}_j\right>\right)_{i,j=1}^N$, where the residual for noisy data now reads
\[r^{\delta}(\kappa):=\Lambda^{\delta}(\sigma)-\Lambda(1) - \Lambda^{\prime}(1)\kappa,\]
and the error is bounded from above by $\delta$ in the operator norm, i.e. 
\[\|\Lambda^{\delta}(\sigma) - \Lambda(\sigma) \|_{\llll} \le \delta.\]
When we replace the exact data $\Lambda(\sigma)$ by the noisy data $\Lambda^{\delta}(\sigma)$, we have to change the definition of the biggest coefficient $\beta_k$, too. To this end, we need the following lemma:

\begin{lemma}\label{le:positive} For any bounded linear operator $T$ on a real Hilbert space $H$:
\begin{itemize} 
\item[(i)] (Square Root Lemma) If $T$ is positive, i.e. $T$ is self-adjoint and $\left<x,Tx\right> \ge 0$ for all $x \in H$, then there exists a unique bounded linear positive operator $U$ on $H$ such that $U^2 = T$. Moreover, $U$ commutes with every bounded linear operator which commutes with $T$. We call $U$ the positive square root of $T$ and denote by $U = \sqrt{T}$.
\item[(ii)] (Absolute value of a bounded linear operator) The modulus of $T$
\[|T|:= \sqrt{T^*T}\]
is a positive operator, where $T^*$ is the adjoint operator of $T$. Moreover, $|T|$ commutes with every bounded linear operator which commutes with $T^*T$.

\item[(iii)] (Positive decomposition) If $T$ is self-adjoint, then there exists a unique pair of bounded positive operators $T_+$ and $T_-$ such that $T=T_+ - T_-$, $T_+T_-=0$, $T_+$ and $T_-$ commute with each other and with $T$. Moreover, $|T|=T_+ + T_-$. 
\item[(iv)] If $T$ is positive, then $|T|=T$. 
\item[(v)] If $T$ is self-adjoint, then $T \le |T|$ in quadratic sense.
\item[(vi)] For any bounded linear operators $A,B$ on a real Hilbert space $H$:
\begin{eqnarray}\label{30.10.20}
\||A|-|B|\|_{\mathcal{L}(H)}^2 \le \left(\|A\|_{\mathcal{L}(H)}+\|B\|_{\mathcal{L}(H)} \right)\|A - B\|_{\mathcal{L}(H)}.
\end{eqnarray}
Consequently, the absolute value of operators is continuous w.r.t. the operator norm.
\end{itemize}
\end{lemma}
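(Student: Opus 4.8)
The plan is to deduce (ii)--(vi) from the Square Root Lemma (i), so essentially all the content sits in (i), with (vi) as the one genuinely non-routine consequence.

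For (i), after rescaling we may assume $\|T\|\le 1$. Define the iteration $U_0:=0$ and $U_{n+1}:=U_n+\frac12(T-U_n^2)$. An induction (using the identity $I-U_{n+1}=\frac12(I-U_n)^2+\frac12(I-T)$ and the fact that the $U_n$ are polynomials in $T$ with zero constant term, hence mutually commuting) shows $0\le U_n\le U_{n+1}\le I$ and $U_n^2\le T$ in the quadratic sense. Thus $(U_n)$ is a bounded increasing sequence of commuting self-adjoint operators, so it converges strongly to a positive operator $U$; passing to the limit in $U_n^2=2(U_n-U_{n+1})+T$ gives $U^2=T$. Since any $C$ commuting with $T$ commutes with each $U_n$, strong convergence yields $CU=UC$. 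Uniqueness: if $V\ge0$ and $V^2=T$, then $V$ commutes with $T$, hence with $U$; for $y:=(U-V)x$ one computes $\langle Uy,y\rangle+\langle Vy,y\rangle=\langle(U^2-V^2)x,y\rangle=0$, and, both summands being $\ge0$, they vanish, so $Uy=Vy=0$; then $(U-V)^2x=0$ and $\|(U-V)x\|^2=\langle(U-V)^2x,x\rangle=0$, i.e. $U=V$.

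Now (ii) is immediate: $T^*T$ is self-adjoint with $\langle T^*Tx,x\rangle=\|Tx\|^2\ge0$, so $|T|=\sqrt{T^*T}$ exists, is positive, and commutes with the commutant of $T^*T$. For (iv): if $T\ge0$ then $T^*T=T^2$ and $T$ is itself a positive square root of $T^2$, so uniqueness forces $|T|=T$. For (iii), set $T_\pm:=\frac12(|T|\pm T)$; using $|T|^2=T^*T=T^2$ and that $|T|$ commutes with $T$ (because $T$ commutes with $T^*T$) one gets $T_+T_-=T_-T_+=\frac14(|T|^2-T^2)=0$, $T_+-T_-=T$, $T_++T_-=|T|$, together with the commutation relations asserted in (iii); the positivity of $T_\pm$ and uniqueness of the pair then follow from the (commutative) functional calculus of the commuting self-adjoint pair $T,|T|$, under which $T\mapsto f$, $|T|\mapsto|f|$ and $T_\pm\mapsto\frac12(|f|\pm f)\ge0$. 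Finally (v) is a one-liner: $|T|-T=2T_-\ge0$.

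The one item that needs an idea is (vi), which I would prove in two steps. Step 1: from the algebraic identity
\[|A|^2-|B|^2=A^*A-B^*B=A^*(A-B)+(A-B)^*B\]
one reads off $\||A|^2-|B|^2\|\le(\|A\|+\|B\|)\,\|A-B\|$. Step 2: for \emph{any} positive operators $P,Q$ one has $\|P-Q\|^2\le\|P^2-Q^2\|$; indeed, with $m:=\|P^2-Q^2\|$ we have $P^2\le Q^2+mI\le(Q+\sqrt m\,I)^2$ (the last inequality because $2\sqrt m\,Q\ge0$), and since $t\mapsto\sqrt t$ is operator monotone this gives, with (iv), $P=\sqrt{P^2}\le\sqrt{(Q+\sqrt m\,I)^2}=Q+\sqrt m\,I$; symmetrically $Q\le P+\sqrt m\,I$, so $-\sqrt m\,I\le P-Q\le\sqrt m\,I$ and hence (as $P-Q$ is self-adjoint) $\|P-Q\|\le\sqrt m$. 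Taking $P=|A|$, $Q=|B|$ and combining the two steps yields (\ref{30.10.20}); continuity of $T\mapsto|T|$ in the operator norm is then immediate.

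I expect the main obstacle to be the two ``textbook'' ingredients used above --- operator monotonicity of the square root in Step 2 of (vi), and the positivity of $T_\pm$ in (iii). For a self-contained treatment, operator monotonicity of $\sqrt{\cdot}$ follows from the representation $\sqrt P=\frac1\pi\int_0^\infty(\frac1\lambda I-(P+\lambda I)^{-1})\sqrt\lambda\,{\rm d}\lambda$, each integrand being operator monotone because $S\mapsto(S+\lambda I)^{-1}$ is operator antitone on positive operators; and the positivity of $T_\pm$ is available from the same functional calculus used to construct $|T|$. Everything else is routine bookkeeping.
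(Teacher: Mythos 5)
Your proof is correct, but it diverges from the paper's at two places worth comparing. For (i) the paper simply cites the literature, whereas you reprove the Square Root Lemma via the iteration $U_{n+1}=U_n+\tfrac12(T-U_n^2)$; that is the classical Riesz--Sz.-Nagy construction and is fine. Parts (ii), (iv), (v) coincide with the paper's treatment, and for (iii) both arguments rest on continuous functional calculus (the paper phrases it through the Gelfand map, applying $f_\pm$ to $T$; you define $T_\pm=\tfrac12(|T|\pm T)$ and invoke the calculus for positivity) --- the one imprecision on your side is that the \emph{uniqueness} of the pair does not ``follow from the functional calculus'' as stated, since a competing pair $(A,B)$ need not a priori lie in it; the paper's one-line fix is to note $T^2=A^2+B^2=(A+B)^2$ (using $AB=BA=0$), so $A+B=\sqrt{T^2}=|T|$ by uniqueness of positive square roots, whence $A=T_+$, $B=T_-$. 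The genuine divergence is in (vi): Step 1, the factorization $A^*A-B^*B=A^*(A-B)+(A-B)^*B$, is identical, but for the key inequality $\|P-Q\|^2\le\|P^2-Q^2\|$ the paper recites Kittaneh's proof, extracting an approximate eigenvector sequence $g_n$ for $W=U-V$ with $\langle g_n,Wg_n\rangle\to\|W\|$ and estimating $\langle g_n,(UW+WV)g_n\rangle$ from below, while you sandwich $P^2\le(Q+\sqrt m\,I)^2$ and apply operator monotonicity of the square root. Both are valid; the paper's route is more self-contained (it needs only the variational formula for the norm of a self-adjoint operator), whereas yours is shorter and more conceptual but imports operator monotonicity of $\sqrt{\cdot}$, which you do justify via the integral representation $\sqrt P=\frac1\pi\int_0^\infty\bigl(\tfrac1\lambda I-(P+\lambda I)^{-1}\bigr)\sqrt\lambda\,{\rm d}\lambda$ and the antitonicity of inversion. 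No gap affects the applications of the lemma elsewhere in the paper.
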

\begin{proof}[Proof of Lemma \ref{le:positive}]
\begin{itemize}
\item[(i)] This famous fact can be proved either by means of classical functional analysis (see, for example, \cite[Appendix]{young1988introduction}) or by using $C^*$-algebra techniques (\cite[Proposition 4.33]{douglas2012banach}).
\item[(ii)] For all $x\in H$, it holds that
\[\fl\left<x,T^*Tx\right>=\left<Tx,Tx\right> \ge 0 \quad \mbox{ and } \quad \left<x,T^*Tx\right>=\left<Tx,Tx\right> = \left<T^*Tx,x\right>.\]
Hence, $T^*T$ is a positive operator. Thanks to (i), $T^*T$ has a unique positive square root $\sqrt{T^*T}$, which commutes with every bounded linear operator commuting with $T^*T$.

\item[(iii)] We shall prove this fact by using techniques in $C^*$-algebra. Indeed, since $T$ is self-adjoint, it is normal. Hence, the Gelfand map $\Gamma$ establishes a $*$-isometric isomorphism between the space of continuous functions on $\sigma(T)$ (here $\sigma(T)$ denotes the spectrum of $T$) and the $C^*$-algebra $\textswab{C}^*$ (see, e.g. \cite[Theorem 2.31]{douglas2012banach}). Define by 
\[f_+(x)=\max(x,0) \quad \mbox{ and } \quad f_-(x)=-\min(x,0),\]
then both $f_+$ and $f_-$ are continuous positive functions on $\sigma(T)$. Thus, $T_+:=f_+(T)=\Gamma(f_+)$ and $T_-:=f_-(T)=\Gamma(f_-)$ are well-defined positive operators on the space of bounded linear operators $\mathcal{B}(H)$. Moreover, $T_+$ and $T_-$ commute with each other and with $T$. Since $f_+$ and $f_-$ satisfy
\begin{eqnarray*}
 f_+(x) - f_-(x) = x, \quad f_+(x)+f_-(x) = |x|, \quad f_+(x)f_-(x)=0
\end{eqnarray*}
for all $x$, it follows that the two positive operators $T_+$ and $T_-$ satisfy
\begin{eqnarray*}
 T_+ - T_- =T, \quad T_++ T_-=|T|, \quad T_+T_-=0.
\end{eqnarray*}

On the other hand, we also have
\[\|T_+\|_{\mathcal{L}(H)}=\sup_{x \in \sigma(T)}|f_+(x)| \le \sup_{x \in \sigma(T)}|x| = \|T\|_{\mathcal{L}(H)}.\]
By the same way, one can prove that $T_-$ is also a bounded operator. It remains to show that the pair $(T_+,T_-)$ is unique. Indeed, if $T=A-B$ where $A,B$ are positive bounded operators satisfying $AB=0$, then $T^2=A^2+B^2=(A+B)^2$. Thus $A+B$ is the unique square root of $T^2$, i.e. $A+B=|T|$. Hence, $A=(|T|+T)/2=T_+$ and $B=(|T|-T)/2=T_-$. The uniqueness holds.

\item[(iv)] Since $T$ is positive, it holds that $T^2 = T^*T$. Thanks to (i), $T$ is a unique positive square roof of $T^*T$, i.e. $T=\sqrt{T^*T}$. On the other hand, by (ii), $\sqrt{T^*T}=|T|$. Thus, $T=|T|$.



\item[(vi)] First we prove that, for any linear, bounded, positive operators $U,V$ on a real Hilbert space H, it holds that
\begin{eqnarray}\label{30.10.100}
\|U-V\|_{\mathcal{L}(H)}^2 \le \|U^2-V^2\|_{\mathcal{L}(H)}.
\end{eqnarray}
The above inequality had been proved in \cite[Theorem 1]{kittaneh1986inequalities}. We shall recite the proof here for the reader's convenience. 

Denote by $W:=U-V$ then $W$ is linear bounded self-adjoint operator on $H$. Hence
\begin{eqnarray*}
\|W\|_{\mathcal{L}(H)} = \sup_{\|g\|_{H} =1} \left| \left< g, W g \right> \right|.
\end{eqnarray*}
Thus, we can find a sequence $\{g_n\} \subset H$ such that 
\[\|g\|_{H}=1\quad \mbox{ and } \quad |\left<g_n,Wg_n\right>| \uparrow \|W\|_{\mathcal{L}(H)} \;\; \mbox{ as } n \to \infty.\] 
Denote by $t:=\|W\|_{\mathcal{L}(H)}$, we have
\begin{eqnarray}\label{30.10.1000}
\fl\|U^2-V^2\|_{\mathcal{L}(H)} &=& \|UW+WV\|_{\mathcal{L}(H)} \ge \left| \left<g_n,(UW+WV)g_n\right>\right|\nonumber\\
&=&\left| \left<g_n, U(W-t)g_n \right> + \left<(W-t)g_n,Vg_n \right> + t\left<g_n,(U+V)g_n\right> \right|\nonumber\\
&\ge&t \left<g_n,(U+V)g_n\right> - \left|\left<g_n,U(W-t)g_n\right>\right|-\left|\left<(W-t)g_n,Vg_n\right>\right|.
\end{eqnarray}
Since both $U$ and $V$ are positive operators, we have $U+V \ge \pm W$. Thus, 
\begin{eqnarray}\label{30.10.1001}
\left<g_n,(U+V)g_n\right> \ge \left| \left<g_n,Wg_n\right>\right| \quad \mbox{ for all } n.
\end{eqnarray}
On the other hand, $Wg_n \to tg_n$ as $n \to \infty$ because
\begin{eqnarray*}
\fl \|Wg_n-tg_n\|_{\mathcal{L}(H)}^2&=&\left<Wg_n-tg_n,Wg_n-tg_n\right>=\|Wg_n\|^2 - 2t\left<g_n,Wg_n\right> + t^2\\
&\le&2t^2-2t\left<g_n,Wg_n\right>.
\end{eqnarray*}
Combining (\ref{30.10.1000}), (\ref{30.10.1001}) and taking $n \to \infty$, we get (\ref{30.10.100}).

Thanks to (\ref{30.10.100}), for all bounded linear operators $A$ and $B$ we have
\begin{eqnarray*}
\fl \| |A|-|B|\|_{\mathcal{L}(H)}^2 &\le& \| |A|^2-|B|^2\|_{\mathcal{L}(H)} = \|A^*A -B^*B\|_{\mathcal{L}(H)}\\
& \le &\|A^*\|_{\mathcal{L}(H)}\|A-B\|_{\mathcal{L}(H)}+ \|A^*-B^*\|_{\mathcal{L}(H)}\|B\|_{\mathcal{L}(H)}\\
&=&\left(\|A\|_{\mathcal{L}(H)}+\|B\|_{\mathcal{L}(H)}\right)\|A-B\|_{\mathcal{L}(H)}.
\end{eqnarray*}
\end{itemize}
\end{proof}

Back to our issues, since $V:=\Lambda(1) - \Lambda(\sigma)$ is a linear bounded positive operator, Lemma \ref{le:positive} yields that $|V|=V$. The monotonicity test (\ref{ineq:1}) can be rewritten as
\begin{eqnarray}\label{eqn:monotonicitytest111}
|V| + \alpha \Lambda^{\prime}(1) \chi_{P_k} \ge 0 \quad \mbox{ for all } \quad \alpha \in [0,\beta_k].
\end{eqnarray}
When replacing the exact data $\Lambda(\sigma)$ by the noisy data $\Lambda^{\delta}(\sigma)$, the above inequality does not hold in general for all $P_k \subseteq D$. Indeed, since the operator in the left-hand side of (\ref{eqn:monotonicitytest111}) is compact, it has eigenvalues arbitrarily close to zero. A small noise will make these eigenvalues a little bit negative which can make the $\beta_k$ defined in (\ref{eqn:monotonicitytest111}) zero everywhere. Hence, we replace the test (\ref{eqn:monotonicitytest111}) with
\begin{eqnarray}\label{cond:noise}
|V^{\delta}|+ \alpha \Lambda^{\prime}(1) \chi_{k} \ge -\delta I 
\end{eqnarray}
where $V^{\delta}:= \Lambda(1) - \Lambda^{\delta}(\sigma)$ and $I$ is the identity operator from $\lll$ to $\lll$. Since we can always redefine the data $V^{\delta}$ by defining $V^{\delta}:=(V^{\delta}+{(V^{\delta}})^{*})/2$, without loss of generality, we can assume that $V^{\delta}$ is self-adjoint.

We then define $\beta^{\delta}_k$ as the biggest coefficient such that inequality (\ref{cond:noise}) holds for all $\alpha \in [0,\beta^{\delta}_k]$. We see that, $\min({\it a},\beta^{\delta}_k)$ will still be ${\it a}$ inside the inclusions but possibly a little bit larger than $0$ outside. More precisely, we have the following lemma:


\begin{lemma}\label{le:compare}
Assume that $\|\Lambda^{\delta}(\sigma) - \Lambda(\sigma)\|_{\llll} \le \delta$, then for every pixel $P_k$, it holds that $\beta_k \le \beta^{\delta}_k$ for all $\delta >0$.
\end{lemma}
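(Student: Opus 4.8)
The plan is to verify that the whole interval $[0,\beta_k]$ is admissible for the noisy monotonicity test $(\ref{cond:noise})$; since $\beta_k^\delta$ is by definition the largest $\beta$ for which $(\ref{cond:noise})$ holds on all of $[0,\beta]$, this immediately forces $\beta_k^\delta\ge\beta_k$ (the case $\beta_k=\infty$ being included automatically). So I would fix a pixel $P_k$ and an arbitrary $\alpha\in[0,\beta_k]$. Because $V:=\Lambda(1)-\Lambda(\sigma)$ is a positive operator, Lemma \ref{le:positive}(iv) gives $|V|=V$, and the definition of $\beta_k$ in $(\ref{ineq:1})$ (equivalently $(\ref{eqn:monotonicitytest111})$) then reads
\[
V+\alpha\,\Lambda'(1)\chi_{P_k}\ \ge\ 0 \qquad\mbox{in the quadratic sense.}
\]

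The point I would stress is that one should \emph{not} attempt to control $\||V^\delta|-|V|\|$: the only tool available for that, Lemma \ref{le:positive}(vi), produces a bound of order $\sqrt{\delta}$, which is useless here. Instead I would exploit two elementary facts. First, $V^\delta$ is self-adjoint, so Lemma \ref{le:positive}(v) yields $|V^\delta|\ge V^\delta$. Second, $V^\delta-V=\Lambda(\sigma)-\Lambda^\delta(\sigma)$ is self-adjoint with $\|V^\delta-V\|_{\llll}\le\delta$ by hypothesis, hence $V^\delta\ge V-\delta I$. (The preliminary symmetrization $V^\delta\mapsto(V^\delta+(V^\delta)^*)/2$ used to assume $V^\delta$ self-adjoint is harmless, since $V$ is self-adjoint and symmetrizing cannot increase $\|V^\delta-V\|_{\llll}$.)

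Chaining these inequalities, for every $\alpha\in[0,\beta_k]$,
\[
|V^\delta|+\alpha\,\Lambda'(1)\chi_{P_k}\ \ge\ V^\delta+\alpha\,\Lambda'(1)\chi_{P_k}\ \ge\ \bigl(V+\alpha\,\Lambda'(1)\chi_{P_k}\bigr)-\delta I\ \ge\ -\delta I,
\]
which is exactly $(\ref{cond:noise})$. Thus every $\alpha\in[0,\beta_k]$ is admissible for the noisy test, and therefore $\beta_k^\delta\ge\beta_k$. There is essentially no genuine obstacle in this argument; the only subtlety is recognizing that positivity of $V$ (giving $|V|=V$) together with the cheap one-sided estimate $|V^\delta|\ge V^\delta$ completely circumvents any need for a two-sided comparison of the moduli, which would otherwise be the hard part.
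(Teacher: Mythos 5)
Your proof is correct and follows essentially the same route as the paper: both arguments combine $|V^\delta|\ge V^\delta$ (Lemma \ref{le:positive}(v)), the quadratic-form estimate $V^\delta-V\ge-\delta I$ obtained from $\|V^\delta-V\|_{\llll}\le\delta$, and the defining inequality for $\beta_k$ to conclude that every $\alpha\in[0,\beta_k]$ passes the noisy test \req{cond:noise}. The only cosmetic difference is that the paper verifies the chain at $\alpha=\beta_k$ only (which suffices since $\Lambda'(1)\chi_{P_k}\le 0$ makes the left side decreasing in $\alpha$), whereas you verify it for all $\alpha\in[0,\beta_k]$ directly.
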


\begin{proof}[Proof of Lemma \ref{le:compare}]
It is sufficient to check that $\beta_k$ satisfies (\ref{cond:noise}). Indeed, since the operator $V-V^{\delta}$ is linear, bounded and self-adjoint, we have for all $\|g\|_{\lll}=1$:

\[\left|\left< g, (V - V^{\delta})g\right> \right|  \le \|V-V^{\delta}\|_{\llll}= \|\Lambda^{\delta}(\sigma) - \Lambda(\sigma)\|_{\llll}\le \delta.\]
Now for any $\tilde{g} \in \lll$, if $\|\tilde{g}\|_{\lll} \ne 0$, then
\begin{eqnarray*}
\left|\left<{\tilde{g}},(V-V^{\delta}){\tilde{g}}\right>\right| = \|\tilde{g}\|^2\left|\left<\frac{\tilde{g}}{\|\tilde{g}\|},( V - V^{\delta})\frac{\tilde{g}}{\|\tilde{g}\|}\right>\right| \le \delta \|\tilde{g}\|^2.
\end{eqnarray*}
Thus, $V^{\delta}-V \ge -\delta I$ in quadratic sense. Besides, Lemma \ref{le:positive} implies $|V^{\delta}|\ge V^{\delta}$. Hence, 
\begin{eqnarray*}\label{eqn:11}
 |V^{\delta}| + \beta_k \Lambda^{\prime}(1) \chi_k  \ge V^{\delta} + \beta_k \Lambda^{\prime}(1) \chi_k = V + \beta_k \Lambda^{\prime}(1) \chi_k + V^{\delta} - V \ge  -\delta I.
\end{eqnarray*}

\end{proof}
As a consequence of Lemma \ref{le:compare}, it holds that
\begin{itemize}
\item[1.] If $P_k$ lies inside $D$, then $\beta_k^{\delta} \ge a$.
\item[2.] If $\beta_k^{\delta} = 0$, then $P_k$ does not lie inside $D$.
\end{itemize}

We end this section by proving the following stability result:
\begin{thm}\label{thm:stability} Consider the minimization problem
\begin{eqnarray}\label{prb:3}
\min_{\kappa \in \mathcal{A}^{\delta}} \|{\rm \bf R}^{\delta}(\kappa)\|_F
\end{eqnarray}
where ${\rm \bf R}^{\delta}(\kappa)$ represents the $N$-by-$N$ matrix $\left( \left< \bar{g}_i,r^{\delta}(\kappa)\bar{g}_j\right>\right)_{i,j=1}^N$, and the admissible set for noisy data is defined by
\[\mathcal{A}^{\delta}:=\left\{\kappa \in L^{\infty}(\Omega): \kappa = \sum_{k=1}^P a_k \chi_{P_k}, \; a_k \in \mathbb{R}, \; 0 \le a_k \le \min({\it a},\beta^{\delta}_k)\right\}.\]
The following statements hold true:
\begin{itemize}
\item[(i)] Problem (\ref{prb:3}) admits a minimizer.
\item[(ii)] Let $\hat{\kappa}:=\sum_{k=1}^P \min({\it a},\beta_k)\chi_k$ and $\hat{\kappa}^{\delta}:=\sum_{k=1}^P \hat{a}^{\delta}_k \chi_k$ be minimizers of problems (\ref{5.11.15.1}) and (\ref{prb:3}) respectively. Then $\hat{\kappa}^{\delta}$ pointwise converges to $\hat{\kappa}$ as $\delta \to 0$. 
\end{itemize}
\end{thm}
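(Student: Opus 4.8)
The plan is to mirror the structure of the proof of Theorem \ref{thm:main} for part (i), and then to handle part (ii) by a compactness-plus-continuity argument, using Lemma \ref{le:compare} to control the admissible sets. For part (i), I would first observe that the admissible set $\mathcal{A}^\delta$ is again compact in $L^\infty(\Omega)$: it is the image, under the bounded linear map $(a_1,\dots,a_P)\mapsto \sum_k a_k\chi_{P_k}$, of the box $\prod_{k=1}^P [0,\min(a,\beta_k^\delta)]$ in $\mathbb{R}^P$, and each coordinate interval is bounded (by $a<\infty$) and closed. Since $\kappa\mapsto\|{\rm \bf R}^\delta(\kappa)\|_F^2 = \sum_{i,j}\langle\bar g_i, r^\delta(\kappa)\bar g_j\rangle^2$ is continuous (indeed a quadratic polynomial in the $a_k$'s, because $r^\delta(\kappa)=V^\delta-\Lambda'(1)\kappa$ depends affinely on $\kappa$), it attains its minimum on $\mathcal{A}^\delta$. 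Note that here, unlike in Theorem \ref{thm:main}, I would not claim uniqueness, since the over-constrained structure that forced the minimizer to be extremal (Step 3 of that proof) used the exact-data sign property $r(\kappa)\le 0$, which need not survive the perturbation.

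For part (ii), the key quantitative input is Lemma \ref{le:compare}: $\beta_k\le\beta_k^\delta$ for every $\delta>0$ and every pixel $P_k$, so $\mathcal{A}\subseteq\mathcal{A}^\delta$ and in particular $\hat\kappa\in\mathcal{A}^\delta$. Hence $\|{\rm \bf R}^\delta(\hat\kappa^\delta)\|_F\le\|{\rm \bf R}^\delta(\hat\kappa)\|_F$. Since $\|\Lambda^\delta(\sigma)-\Lambda(\sigma)\|_{\llll}\le\delta$, the entries of ${\rm \bf R}^\delta(\kappa)$ differ from those of ${\rm \bf R}(\kappa)$ by $\langle\bar g_i,(\Lambda(\sigma)-\Lambda^\delta(\sigma))\bar g_j\rangle$, which is $O(\delta)$ uniformly in $\kappa$ (the $\bar g_i$ are a fixed orthonormal set); thus $\|{\rm \bf R}^\delta(\kappa)\|_F\to\|{\rm \bf R}(\kappa)\|_F$ uniformly in $\kappa\in L^\infty(\Omega)$ as $\delta\to0$. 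In particular $\|{\rm \bf R}^\delta(\hat\kappa)\|_F\to\|{\rm \bf R}(\hat\kappa)\|_F = \min_{\kappa\in\mathcal{A}}\|{\rm \bf R}(\kappa)\|_F$, which by Theorem \ref{thm:main}(i) is the \emph{unique} global minimum value, attained only at $\hat\kappa$.

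Now I would argue by contradiction along subsequences. Identify each $\hat\kappa^\delta$ with its coefficient vector $(\hat a_1^\delta,\dots,\hat a_P^\delta)\in\prod_k[0,\min(a,\beta_k^\delta)]\subseteq[0,a]^P$, a fixed compact box independent of $\delta$. If $\hat\kappa^\delta$ does not converge pointwise to $\hat\kappa$, there is a sequence $\delta_m\to0$ with coefficient vectors staying outside a fixed neighbourhood of the coefficient vector of $\hat\kappa$; passing to a further subsequence, $\hat\kappa^{\delta_m}\to\kappa^*$ (pointwise, equivalently in $L^\infty$, since finitely many coordinates) with $\kappa^*\neq\hat\kappa$. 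By uniform convergence of the functionals and $\|{\rm \bf R}^{\delta_m}(\hat\kappa^{\delta_m})\|_F\le\|{\rm \bf R}^{\delta_m}(\hat\kappa)\|_F\to\|{\rm \bf R}(\hat\kappa)\|_F$, continuity gives $\|{\rm \bf R}(\kappa^*)\|_F\le\|{\rm \bf R}(\hat\kappa)\|_F$. To conclude that $\kappa^*\in\mathcal{A}$ — so that this contradicts uniqueness of the minimizer $\hat\kappa$ — I would use that $\min(a,\beta_k^{\delta_m})\to\min(a,\beta_k)$ as $\delta_m\to0$: indeed $\beta_k^\delta$ is monotone and by Lemma \ref{le:compare} bounded below by $\beta_k$, and an upper semicontinuity argument on the test (\ref{cond:noise}) (the left-hand side depends continuously on $\delta$ in operator norm and on $\alpha$, and $-\delta I\to0$) shows $\limsup_{\delta\to0}\beta_k^\delta\le\beta_k$; hence $0\le\kappa^*_{|P_k}=\lim\hat a_k^{\delta_m}\le\lim\min(a,\beta_k^{\delta_m})=\min(a,\beta_k)$, i.e. $\kappa^*\in\mathcal{A}$. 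Then $\|{\rm \bf R}(\kappa^*)\|_F\le\|{\rm \bf R}(\hat\kappa)\|_F=\min_{\mathcal{A}}\|{\rm \bf R}\|_F$ forces $\kappa^*=\hat\kappa$, a contradiction.

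The main obstacle I anticipate is the continuity statement $\min(a,\beta_k^\delta)\to\min(a,\beta_k)$ as $\delta\to0$, i.e. the upper semicontinuity $\limsup_{\delta\to0}\beta_k^\delta\le\beta_k$ — Lemma \ref{le:compare} only gives the lower bound $\beta_k^\delta\ge\beta_k$, and one must rule out that the relaxation by $-\delta I$ lets $\beta_k^\delta$ jump strictly above $\beta_k$ in the limit. This requires a careful spectral argument: if $\alpha^*=\limsup\beta_k^\delta>\beta_k$, then for a subsequence $|V^{\delta_m}|+\alpha_m\Lambda'(1)\chi_{P_k}\ge-\delta_m I$ with $\alpha_m\to\alpha^*$; since $|V^{\delta_m}|\to|V|=V$ in operator norm (Lemma \ref{le:positive}(vi)) and $\delta_m\to0$, passing to the limit in the quadratic form gives $V+\alpha^*\Lambda'(1)\chi_{P_k}\ge0$, contradicting maximality of $\beta_k$ in (\ref{eqn:monotonicitytest111}) when $\alpha^*>\beta_k$. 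If instead one wants to avoid relying on a possibly infinite or delicately-behaved $\beta_k$, the cleanest route is to replace "$\kappa^*\in\mathcal{A}$" by the weaker but sufficient observation that $\min(a,\beta_k^\delta)$ is uniformly bounded by $a$ and converges to $\min(a,\beta_k)$, which is all that the box-compactness argument actually needs.
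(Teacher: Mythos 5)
Your proposal is correct and follows essentially the same route as the paper: compactness of the coefficient box $[0,a]^P$, the inclusion $\mathcal{A}\subseteq\mathcal{A}^{\delta}$ from Lemma \ref{le:compare} giving $\|{\rm \bf R}^{\delta}(\hat{\kappa}^{\delta})\|_F\le\|{\rm \bf R}^{\delta}(\hat{\kappa})\|_F$, convergence of the residual functionals via $|V^{\delta}|\to|V|$, uniqueness of the exact-data minimizer from Theorem \ref{thm:main}, and a subsequence argument to upgrade to convergence of the whole family. The only (harmless) difference is that the paper never needs your upper-semicontinuity claim $\limsup_{\delta\to 0}\beta_k^{\delta}\le\beta_k$, which you flag as the main obstacle: it instead passes to the limit directly in $|V^{\delta_n}|+\hat{a}_k^{\delta_n}\Lambda'(1)\chi_{P_k}\ge-\delta_n I$ (valid since $\hat{a}_k^{\delta_n}\le\beta_k^{\delta_n}$) to conclude $\lim\hat{a}_k^{\delta_n}\le\beta_k$ — the same spectral computation you perform, applied to the minimizing coefficients themselves rather than to $\beta_k^{\delta}$.
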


\begin{remark} Same argument as Remark \ref{re10.11.15}, we get the strict convexity of   $\kappa \mapsto \|{\rm \bf R}^{\delta}(\kappa)\|^2_F$ when the number $N$ of boundary currents are sufficiently large.  Since all minimizers of (\ref{prb:3}) minimize $\|{\rm \bf R}^{\delta}(\kappa)\|^2_F$, it holds that  (\ref{prb:3}) has a unique minimizer if $N$ is sufficiently large.
\end{remark}

\begin{proof}[Proof of Theorem \ref{thm:stability}]
{\bf (i)} The existence of minimizers of (\ref{prb:3}) is obtained in the same manner as Theorem \ref{thm:main}(i). Indeed, since the functional $\kappa \mapsto \|{\rm \bf R}^{\delta}(\kappa)\|^2_F$ is continuous, (\ref{prb:3}) admits at least one minimizer in the compact set $\mathcal{A}^{\delta}$.


{\bf (ii)}
{\bf Step 1:} Convergence of a subsequence of $\hat{\kappa}^{\delta}$

For any fixed $k$, the sequence $\{\hat{a}_k^{\delta}\}_{\delta>0}$ is bounded from below by $0$ and from above by ${\it a}$. By Weierstrass' Theorem, there exists a subsequence $(\hat{a}^{\delta_n}_1, \dots, \hat{a}^{\delta_n}_P)$ converging to some limit $(a_1,\dots,a_P)$. Of course, $0\le a_k \le {\it a}$ for all $k=1,\dots,P$.

{\bf Step 2:} Upper bound of the limit 

We shall check that $a_k \le \beta_k$ for all $k=1,\dots,P$. Indeed, thanks to (\ref{30.10.20})
\begin{eqnarray*}
 \||V^{\delta}|-|V|\|_{\llll}^2 &\le& \left(\|V^{\delta}\|_{\llll}+\|V\|_{\llll}\right)\|V^{\delta}-V\|_{\llll}\\
& \le &(2\|V\|_{\llll}+\delta)\delta.
\end{eqnarray*}
Thus, $|V^{\delta}|$ converges to $|V|$ in the operator norm as $\delta \to 0$. Hence, for any fixed $k$, 
\[ |V| + a_k \Lambda^{\prime}(1)\chi_k = \lim_{\delta_n \to 0} \left(|V^{\delta_n}| + \hat{a}^{\delta_n}_k \Lambda^{\prime}(1) \chi_k \right)\]
 in the operator norm. It is straight-forward to see that, for any $g\in \lll$
\[
\fl \left<g, (|V| + a_k \Lambda^{\prime}(1) \chi_k) g \right> = \lim_{\delta_n \to 0} \left<g,(|V^{\delta_n}| + \hat{a}^{\delta_n}_k \Lambda^{\prime}(1) \chi_k ) g \right> 
\ge - \lim_{\delta_n \to 0} \left<g,\delta_n g\right> =0.
\]

{\bf Step 3:} Minimality of the limit

By Lemma \ref{le:compare}, $\min({\it a},\beta_k) \le \min({\it a},\beta_k^{\delta})$ for all $k=1,\dots,P$. Thus, $\hat{\kappa}$ belongs to the admissible class of the minimization problem (\ref{prb:3}) for all $\delta >0$. By the minimality of $\hat{\kappa}^{\delta}$, we get
\begin{eqnarray}\label{31.10.2}
\|{\rm \bf R}^{\delta}(\hat{\kappa}^{\delta})\|_F \le \|{\rm \bf R}^{\delta}(\hat{\kappa})\|_F .
\end{eqnarray}
Denote by $\kappa = \sum_{k=1}^P a_k \chi_k$, where $a_k$'s are the limits obtained in Step 1. We have that
\[\|{\rm \bf R}^{\delta_n}(\hat{\kappa}^{\delta_n}) \|^2_F = \sum_{i,j=1}^N \left<\bar{g}_i,\left( -V^{\delta_n}-  \sum_{k=1}^P \hat{a}^{\delta_n}_k  \Lambda^{\prime} (1)\chi_k\right)\bar{g}_j\right>^2\]
and
\[\|{\rm \bf R}({\kappa}) \|^2_F = \sum_{i,j=1}^N \left<\bar{g}_i,\left( -V -  \sum_{k=1}^P {a}_k  \Lambda^{\prime} (1)\chi_k\right)\bar{g}_j\right>^2.\]
Since $V^{\delta}$ converges to $V$ in the operator norm, for all fixed $g \in \lll$, $V^{\delta}g$ converges to $Vg$ in $\lll$. Taking into account of the fact that $\hat{a}^{\delta_n}_k$ converges to $a_k$ for any $k \in \{1,\dots,P\}$ as $\delta_n \to 0$, it is easy to check that $\|{\rm \bf R}^{\delta_n}(\hat{\kappa}^{\delta_n})\|_F $ converges to $\|{\rm \bf R}(\kappa)\|_F$ as $\delta_n \to 0$. In the same manner, we can show that $\|{\rm \bf R}^{\delta_n}(\hat{\kappa})\|_F$ converges to $\|{\rm \bf R}(\hat{\kappa})\|_F$. Thus, it follows from (\ref{31.10.2}):
\[\|{\rm \bf R}(\kappa)\|_F \le \|{\rm \bf R}(\hat{\kappa})\|_F.\]
Since $\kappa$ belongs to the admissible class of problem (\ref{5.11.15.1}), the above inequality implies that it is in fact a minimizer of (\ref{5.11.15.1}). By the uniqueness of the minimizer, we obtain $\kappa = \hat{\kappa}$, that is $a_k=\min({\it a},\beta_k)$.

{\bf Step 4:} Convergence of the whole sequence $\hat{\kappa}^{\delta}$

We have proved so far that every subsequence of $(\hat{a}^{\delta}_1,\dots,\hat{a}^{\delta}_P)$ has a convergent subsubsequence, that converges to the limit $l=(\min({\it a},\beta_1),\dots,\min({\it a},\beta_P))$. This implies the convergence of the whole sequence  $(\hat{a}^{\delta}_1,\dots,\hat{a}^{\delta}_P)$ to $l$. This ends the proof of Theorem \ref{thm:stability}.
\end{proof}

\section{Numerical results}
\label{Sec:num}

We do the numerical experiment for the case $\Omega$ is the unit disk in $\mathbb{R}^2$ centered at the origin. We consider the current density $\bar{g}_i$ in the following orthonormal set of $L^2(\partial \Omega)$:
\begin{eqnarray*} \label{orthonormal}
\left\{ \frac{1}{\sqrt{\pi}} \sin (j\phi), \frac{1}{\sqrt{\pi}} \cos(j\phi) \; \left| \right.  \;j=1, \dots, N_1\right\}, \quad N=2N_1,
\end{eqnarray*}
here and in the following, we choose $N_1=16$ and $\phi$ is an angle from the positive $x$-axis. We shall follow the notations in Section \ref{Sec:theo}. Denote by ${\rm \bf V}^{\delta}$ the matrix $(\left<\bar{g}_i,V^{\delta}\bar{g}_j\right>)_{i,j=1}^N$. The minimization matrix ${\rm \bf R}^{\delta}(\kappa)$ now reads
\[{\rm \bf R}^{\delta}(\kappa)=-{\rm \bf V}^{\delta}+\sum_{k=1}^P a_k {\rm \bf S}_k,\]
and we would like to find $(a_1,\dots,a_P)$ satisfying the linear constraint $0 \le a_k \le \min({\it a},\beta_k^{\delta})$ so that $\|{\rm \bf R}^{\delta}(\kappa)\|_F$ is minimized.

First, we shall collect the known data ${\rm \bf V}^{\delta}$ and ${\rm \bf S}_k \;(k=1,\dots,P)$. Then we calculate $\beta_k^{\delta}$, and finally, minimize ${\rm \bf R}^{\delta}(\kappa)$.

\begin{figure}[h!]
\centering
\includegraphics[width=1.0\textwidth]{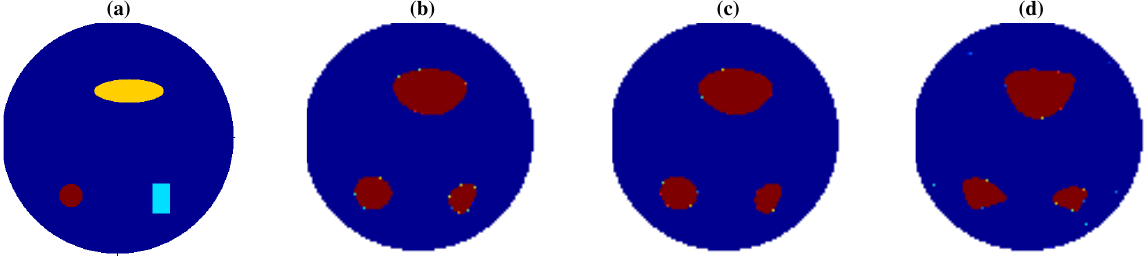}
\caption{Reconstruction of conductivity change under a linear constraint defined by the monotonicity test: (a) true distribution of conductivity change; (b)  $0.1\%$ relative noise; (c) $1\%$ relative noise; (d) $10\%$ relative noise.}
                \label{fig:1}
\end{figure}

\begin{figure}
        \centering
\includegraphics[width=1.0\textwidth]{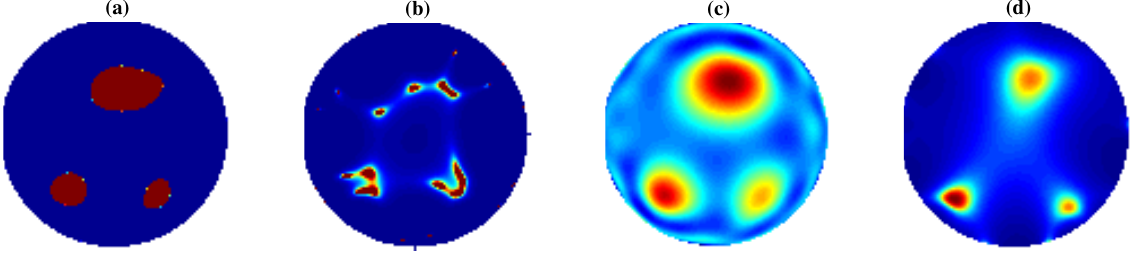}
                \caption{Reconstruction of conductivity change with $5\%$ relative noise: (a) constraint $0\le \kappa \le a$; (b) constraint $0 \le \kappa \le \sum_{k=1}^P \beta^{\delta}_k \chi_k$; (c) without constraint, regularization term $10^{-5}\|x\|_2$; (d) in the operator norm, constraint $0\le \kappa \le \sum_{k=1}^P \min({\it a},\beta^{\delta}_k)$.}
                \label{fig:2}
                  
   \end{figure}
   
   \begin{figure}
        \centering
                        \includegraphics[width=0.99\textwidth]{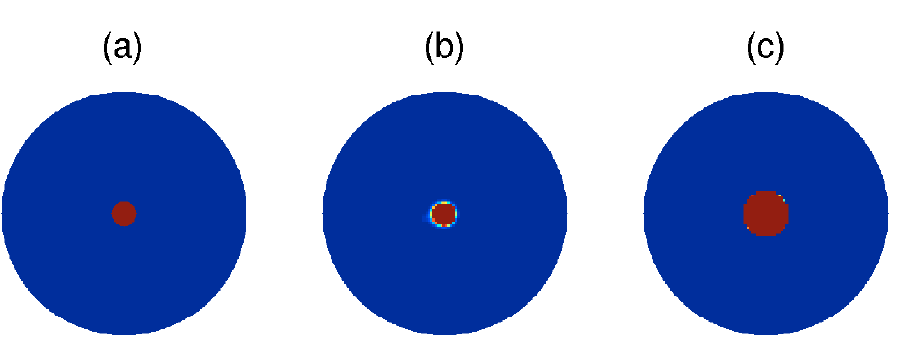}
                \caption{Reconstruction of conductivity change with $10^{-11}$ relative noise: (a) true distribution of conductivity change; (b) constraint $0\le \kappa \le \sum_{k=1}^P \min(a,\beta^{\delta}_k) \chi_k$; (c) constraint $0\le \kappa \le a$.}
                \label{fig:3}\end{figure}
                
                   \begin{figure}
        \centering
                        \includegraphics[width=0.73\textwidth]{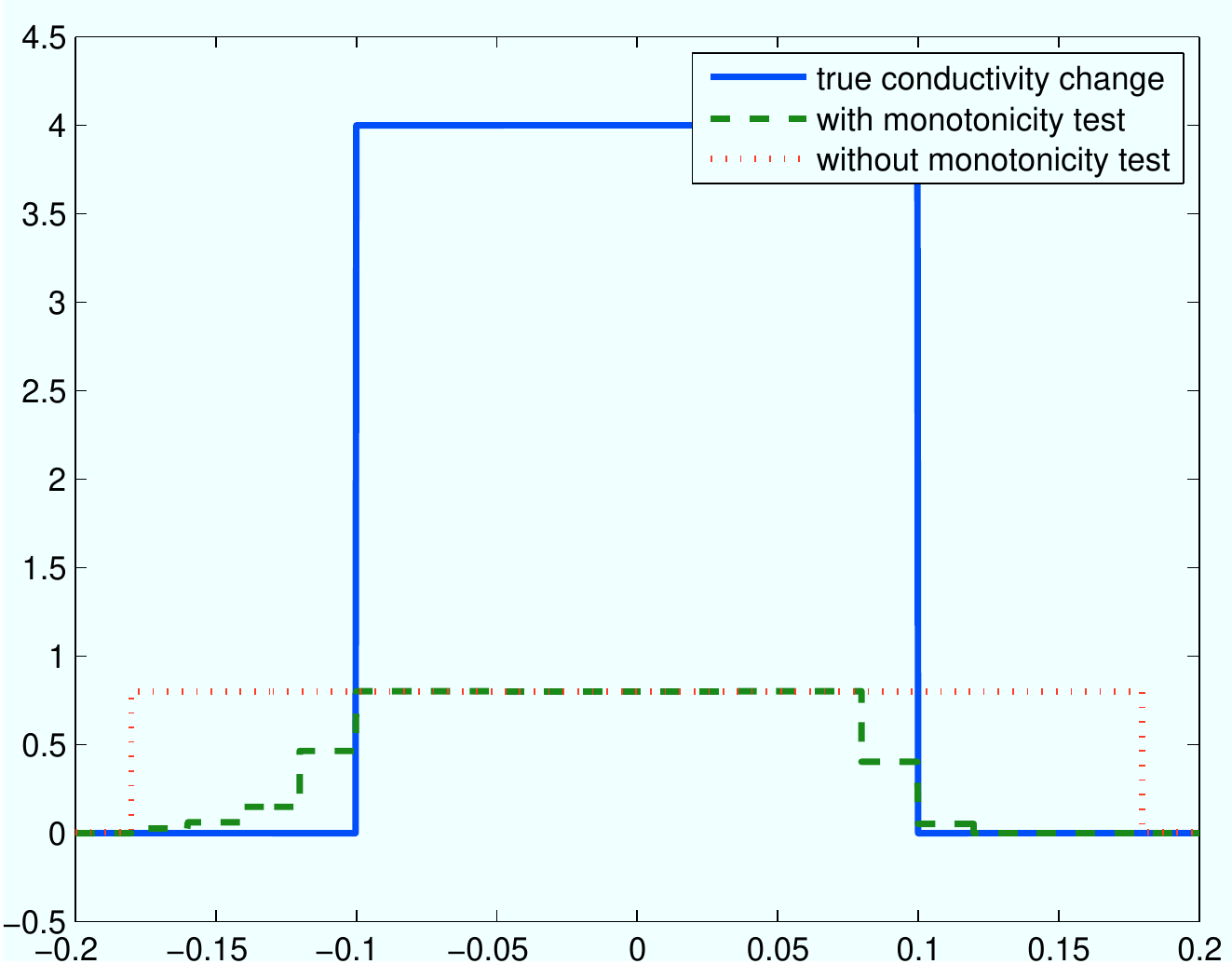}
                \caption{Cut of Figure \ref{fig:3} through the $x$-axis.}
                \label{fig:4}\end{figure}

\begin{figure}
        \centering
\includegraphics[width=1.0\textwidth]{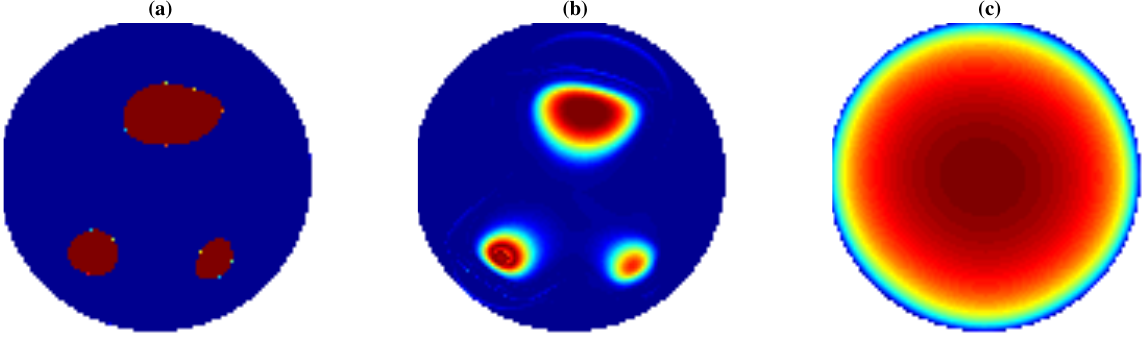}
                \caption{Reconstruction of conductivity change with $5\%$ relative noise: (a) using \texttt{cvx}; (b) using \texttt{quadprog} with
``trust-region-reflective"; (c) using \texttt{quadprog} with default option.}
                \label{fig:5}
                  
   \end{figure}

\subsection{Generating data}
\label{subsec:data}

We shall calculate ${\rm \bf V}^{\delta}$ and ${\rm \bf S}_k \; (k=1,\dots,P)$ with the help of  \texttt{COMSOL}, a commercial finite element software. We remind that ${\rm \bf V}^{\delta}$ is the difference between two matrices $(\left<\bar{g}_i,\Lambda(1)\bar{g}_j\right>)_{i,j=1}^{N}$ and $(\left<\bar{g}_i,\Lambda^{\delta}(\sigma)\bar{g}_j\right>)_{i,j=1}^{N}$; while ${\rm \bf S}_k$ is the matrix $(-\left<\bar{g}_i,\Lambda^\prime(1)\chi_k\bar{g}_j\right>)_{i,j=1}^{N}$ corresponding to pixel $P_k$.


When the conductivity is chosen to be $1$, the forward problem (\ref{eqn:conductivity}) becomes the Laplace equation with Neumann boundary condition $\bar{g}_j$, and admits a unique solution on the unit disk:
\begin{equation}
u^0_j= \cases{ \frac{1}{j\sqrt{\pi}} \sin (j \varphi) r^j, &if  $\bar{g}_j = \frac{1}{\sqrt{\pi}} \sin (j \varphi)$,\\
\frac{1}{j\sqrt{\pi}} \cos (j \varphi) r^j,&if  $\bar{g}_j = \frac{1}{\sqrt{\pi}} \cos (j \varphi)$,}
\end{equation}
here the pair $(r,\phi)$ forms the polar coordinates with respect to the center of $\Omega$. $(\left<\bar{g}_i,\Lambda(1)\bar{g}_j\right>)_{i,j=1}^{N}$ and ${\rm \bf S}_k$ are uniquely defined via:
\[ \left<\bar{g}_i,\Lambda(1)\bar{g}_j\right>=\int_{\partial \Omega}\bar{g}_iu^0_j \; \mbox{ and } \; ({\rm \bf S}_k)_{i,j}=- \left<\bar{g}_i,\Lambda^{\prime}(1)\chi_k \bar{g}_j \right> = \int_{P_k} \nabla u^0_i \cdot \nabla u^0_j. \]

Instead of calculating $(\left<\bar{g}_i,\Lambda(1)\bar{g}_j\right>)_{i,j=1}^{N}$ and $(\left<\bar{g}_i,\Lambda^{\delta}(\sigma)\bar{g}_j\right>)_{i,j=1}^{N}$ separately, we follow the method suggested in \cite{GH07}, which first uses \texttt{COMSOL} to compute ${\rm \bf V}$ and then add noise to ${\rm \bf V}$ to obtain ${\rm \bf V}^{\delta}$. We have
\[ {\rm \bf V}_{i,j}=\left<\bar{g}_i,(\Lambda(1)-\Lambda(\sigma))\bar{g}_j \right> = \int_{\partial \Omega} \bar{g}_i (u^0_j - u_j),\]
where $u_j$ is the unique solution of (\ref{eqn:conductivity}) for conductivity $\sigma$ and boundary current $\bar{g}_j$. Denote by $d_j$ the difference $u^0_j-u_j$, then $d_j$ satisfies the following system
\begin{eqnarray}\label{eqn:110} 
\nabla \cdot \left( \sigma \nabla d_j - (\sigma-1) \nabla u^0_j \right) & = & 0 \quad \mbox{ in } \Omega, \nonumber\\ 
\left( \sigma \nabla d_j - (\sigma -1) \nabla u^0_j \right) \cdot \nu &=& 0 \quad \mbox{ on } \partial \Omega. 
\end{eqnarray}
This system can be solve by using the Coefficient Form PDE model built by \texttt{COMSOL}. Notice that, we have to add the constraint $\int_{\partial \Omega} d_j = 0$ in order to guarantee the uniqueness of solution of (\ref{eqn:110}).

Under the absolute noise $\delta$, the noisy data ${\rm \bf V^{\delta}}$ can be obtained from ${\rm \bf V}$ by
\[{\rm \bf V^{\delta}}:= {\rm \bf V} + \frac{{\rm \bf E}}{\|{\rm \bf E}\|_F}\delta,\]
here ${\rm \bf E}$ is a random matrix in $\mathbb{R}^{N\times N}$ with uniformly distributed entries between $-1$ and $1$. The Hermitian property of ${\rm \bf V}^{\delta}$ follows by redefining $\quad {\rm \bf V^{\delta}}:=(({\rm \bf V}^{\delta})^* + {\rm \bf V}^{\delta})/2.$

\subsection{Finding $\beta^{\delta}_k$}
\label{subsec:beta}

{{In numerical experiment, the parameter $\beta^\delta_k$ also depends on the number of boundary currents $N$. For that reason, we shall call it $\beta_k^{\delta,N}$, where the superscript $N$ denotes the dependence of $\beta_k^{\delta,N}$ on the number of boundary currents. 
		
The infinite-dimensional operators $|V^\delta|$ and $\Lambda'(1)\chi_{P_k}$ in the formula (\ref{cond:noise}) will be replaced by the $N$-by-$N$ matrices ${\rm \bf V}^{\delta}$ and ${\rm \bf S}_k$ as follows
\begin{eqnarray}\label{eqn:numerror}
-\alpha {\rm \bf S}_k \ge -\delta {\rm \bf I} - |{\rm \bf V}^{\delta}| \quad \mbox{ for all } \quad \alpha \in [0,\beta^{\delta,N}_k].
\end{eqnarray}
where the modulus $|{\rm \bf V}^{\delta}|:=\sqrt{({\rm \bf V}^{\delta})^* {\rm \bf V}^{\delta}}$ in this case is called the absolute value of matrices. 
		
It is easy to see that $\beta_k^{\delta,N}\downarrow \beta_k^\delta$ as $N \to \infty$. We shall follow the argument in \cite{Ha15} to calculate $\beta_k^{\delta,N}$.

Since $\delta {\rm \bf I} + |{\rm \bf V}^{\delta}|$ is Hermitian positive-definite, the Cholesky decomposition allows us to decompose it into the product of a lower triangular matrix and its conjugate transpose, i.e.
\[\delta {\rm \bf I} + |{\rm \bf V}^{\delta}| = {\rm \bf L} {\rm \bf L}^*,\]
where ${\rm \bf L}$ is a lower triangle matrix with real and positive diagonal entries. This decomposition is unique. Moreover, since
\[0< {\rm det}(\delta {\rm \bf I} + {\rm \bf V}^{\delta}) = {\rm det}({\rm \bf L}) \;{\rm det} ({\rm \bf L}^*) = {\rm det}({\rm \bf L}) \;\overline{{\rm det ({\bf L})}},\]
it follows that ${\rm \bf L}$ is invertible. For each $\alpha>0$, we have that
\[-\alpha {\rm \bf S}_k + \delta {\rm \bf I} + |{\rm \bf V}^{\delta}| = -\alpha {\rm \bf S}_k + {\rm \bf LL}^*= {\rm \bf L} ( -\alpha {\rm \bf L}^{-1} {\rm \bf S}_k ({\rm \bf L^*})^{-1} + {\rm \bf I}){\rm \bf L}^*.\]
Hence, the positive semi-definiteness of $-\alpha {\rm \bf S}_k + \delta {\rm \bf I} + |{\rm \bf V}^{\delta}|$ is equivalent to the positive semi-definiteness of $-\alpha {\rm \bf L}^{-1} {\rm \bf S}_k ({\rm \bf L^*})^{-1} + {\rm \bf I}$. 

Since both $-{\rm \bf L}^{-1} {\rm \bf S}_k ({\rm \bf L^*})^{-1}$ and $-\alpha {\rm \bf L}^{-1} {\rm \bf S}_k ({\rm \bf L^*})^{-1} + {\rm \bf I}$ are Hermitian matrices, we can apply Weyl's Inequalities \cite[Theorem III.2.1]{bhatia1997matrix} to obtain
\begin{eqnarray}\label{eqn:num_eig}
\lambda_j(-\alpha {\rm \bf L}^{-1} {\rm \bf S}_k ({\rm \bf L^*})^{-1} + {\rm \bf I}) = \alpha \lambda_j(-{\rm \bf L}^{-1} {\rm \bf S}_k ({\rm \bf L^*})^{-1}) + 1,\quad j=1,\dots,N,
\end{eqnarray}
where $\lambda_1({\rm \it a}) \ge \dots \ge \lambda_N({\rm \it a})$ denote the $N$-eigenvalues of some matrix ${\rm \it a}$. 

It is known from Lemma \ref{Sk.positive} that ${\rm \bf S}_k$ is a positive definite matrix. Thus, the matrix ${\rm \bf L}^{-1}{\rm \bf S}_k ({\rm \bf L}^*)^{-1}$ is positive definite, too. This implies $\lambda_j(-{\rm \bf L}^{-1} {\rm \bf S}_k ({\rm \bf L^*})^{-1}) < 0$ for all $j=1,\dots,N$. It then follows from (\ref{eqn:num_eig}) that for every $j \in \{1,\dots,N\}$, the functional $\alpha \mapsto \lambda_j(-\alpha {\rm \bf L}^{-1} {\rm \bf S}_k ({\rm \bf L^*})^{-1} + {\rm \bf I})$ is decreasing in $\alpha$. The biggest $\alpha$ that fulfills 
\[\lambda_j(-\alpha {\rm \bf L}^{-1} {\rm \bf S}_k ({\rm \bf L^*})^{-1} + {\rm \bf I}) \ge 0 \quad \mbox{ for all } \quad j=1,\dots,N\] 
should be
\[\beta^{\delta,N}_k = -\frac{1}{\lambda_N(-{\rm \bf L}^{-1} {\rm \bf S}_k ({\rm \bf L^*})^{-1})},\]
where $\lambda_N(-{\rm \bf L}^{-1} {\rm \bf S}_k ({\rm \bf L^*})^{-1})$ is the smallest (most negative) eigenvalue of $-{\rm \bf L}^{-1} {\rm \bf S}_k ({\rm \bf L^*})^{-1}$.
} }

\subsection{Minimizing the residual}
\label{subsec:min}

{{The minimization problem (\ref{prb:3}) can be rewritten as follows.
		\begin{eqnarray}\label{prb:4}
		\min \left\{\|\sum_{k=1}^P a_k {\rm \bf S}_k -{\rm \bf V}^{\delta}\|_F: 0 \le a_k \le \min({\it a},\beta^{\delta,N}_k) \right\}.
		\end{eqnarray}
		Since the minimization functional $J: (a_1,\dots,a_P) \mapsto \|  \sum_{k=1}^P a_k {\rm \bf S}_k-{\rm \bf V}^{\delta} \|^2_F$ is convex, every minimizer should be global minimizer. The existence of the minimizer of (\ref{prb:4}) follows the continuity of the minimization functional $J$ and the fact that $\{a_k\}_{k=1}^P$ is uniformly bounded. 
		
		Let $\hat{\kappa}^{\delta,N}:=\sum_{k=1}^{N} \hat{a}_k^{\delta,N} \chi_k$, where $(\hat{a}_1^{\delta,N},\dots,\hat{a}_P^{\delta,N})$ be a minimizer of (\ref{prb:4}). When $N$ is large enough, in the same manner as Step 2, proof of Theorem \ref{thm:stability}, we can show that $\hat{a}_k^{\delta,N} \le \beta_k$ for $k=1,\dots,P$. Hence, we can follow the proof of Theorem  \ref{thm:stability} to conclude that $\hat{\kappa}^{\delta,N}$ pointwise converges to the unique minimizer $\hat{\kappa}$ of (\ref{5.11.15.1}) as $\delta$ goes to $0$ (and $N$ is large enough).
	}}

		Problem  (\ref{prb:4}) can be solved either by using \texttt{cvx}, a package for specifying and solving convex programs \cite{cvx,gb08}, or with the \texttt{MATLAB}  built-in function \texttt{quadprog}. Notice that reconstruction images are highly affected by the choice of the minimization algorithms (see Figure \ref{fig:5} and Table 1).
		
		While \texttt{cvx} allows us to directly work with matrices, \texttt{quadprog} requires us to  rearrange the matrix ${\rm \bf V}^{\delta}$ into a long vector and define the $N^2$-by-$P$ matrix ${\rm \bf S}$  whose $k$th-column stores the matrix ${\rm \bf S}_k$:
		\[{\rm Vec}^{\delta}_{(i-1)N+j}:= ({\rm \bf V}^{\delta})_{i,j} \quad \mbox{ and } \quad {\rm \bf S}_{(i-1)N+j,k}:=({{\rm \bf S}_k})_{i,j}\]
		for $i,j = 1,\dots, N$ and $k = 1,\dots, P.$
		The minimization functional in (\ref{prb:4}) then becomes
		\[\| \sum_{k=1}^P a_k {\rm \bf S}_k -{\rm \bf V}^{\delta} \|^2_F = \left\| {\rm \bf S} {\bf a} -{\rm Vec}^{\delta} \right\|_2^2\]
		where ${\bf a}=(a_1,\dots, a_P)^\top$. 
		And we end up with the following quadratic minimization problem under box constraints:
		\begin{eqnarray*}
			\min \left\{\| {\rm \bf S} {\bf a}-{\rm Vec}^{\delta} \|^2_2: \,a_k \in \mathbb{R}, \, 0 \le a_k \le \min({\it a},\beta^{\delta,N}_k) \right\}.
		\end{eqnarray*}
		
		\subsection{Numerical experiments}
\label{subsec:num}

In our numerical experiments, we plot the support of the minimizer $x \mapsto \sum_{k=1}^P \hat{a}_k\chi_{P_k}$ of the minimization problem (\ref{prb:4}), where $\{P_k\}$ is a partition of the unit disk $\Omega$ in $\R^2$ centered at the origin and is chosen independently of the finite element mesh that is used for solving the forward problems.

In the first experiment, the true inclusion $D$ includes a small ball $B$ centered at $(-0.4,-0.5)$ radius $0.1$, a rectangle $R$ whose lower-left corner is located at $(0.3,-0.65)$ and upper-right corner is located at $(0.45,-0.4)$ and an ellipse $E$ centered at $(0.1,0.4)$, horizontal semi-axis $0.3$, vertical semi-axis $0.1$. The reference conductivity $\sigma_0$ is assumed to be $1$ on $\Omega$. The true conductivity $\sigma$ is assumed to be $4$ on $B$, $3$ on $E$, $2$ on $R$ and $1$ outside $D$; and is plotted in the first picture of Figure \ref{fig:1}. The next three pictures of Figure \ref{fig:1} show the reconstruction images (aka. the support of the minimizer $x \mapsto \sum_{k=1}^P \hat{a}_k\chi_{P_k}$ of the minimization problem (\ref{prb:4})) of our method with respect to different levels of noise. 
Figure \ref{fig:1} yields that minimizing the residual under constraint $0 \le \kappa \le \sum_{k=1}^P \min({\it a},\beta^{\delta}_k) \chi_k$ is not affected much by the noise. Moreover, our method produces no artifacts even under high levels of noise. In Figure \ref{fig:2}, we consider the reconstruction images of the minimization residual problem under different constraints, to see that the upper bound ${\it a}$ is essential not only for preventing infinity upper bounds when $\beta^{\delta}_k=\infty$, but also for guaranteeing a good shape reconstruction (figures \ref{fig:1}d and \ref{fig:2}b). In many applications, a bound for  the conductivity change $\gamma$ is known; hence, in these cases, the value of ${\it a}:=1-\frac{1}{1+\inf_D \gamma}$ can be calculated a-priori. All of the reconstruction images showed in Figure \ref{fig:1} and Figure \ref{fig:2} are obtained by using \texttt{cvx} to minimize the residual (\ref{prb:4}).

In the second experiment, we consider the true inclusion $D$ as a ball centered at the origin and with radius $0.1$. The reference conductivity $\sigma_0$ is also assumed to be $1$ on $\Omega$. The true conductivity $\sigma$ is $4$ on $D$ and $1$ outside; and is plotted in the first picture of Figure \ref{fig:3}. The next two pictures of Figure \ref{fig:3} plot the support of the minimizer $x \mapsto \sum_{k=1}^P \hat{a}_k\chi_{P_k}$ of the minimization problem (\ref{prb:4}) following the \texttt{cvx} minimization algorithm when the $\beta^\delta_k$'s do and do not involve in the box constraint. A cut through the origin of three pictures of Figure \ref{fig:3} via the $x$-axis is presented in Figure  \ref{fig:4}. These pictures show that the upper bound $\beta^{\delta}_k$'s play an important role in the proof of the theoretical part and makes the reconstructions slightly better when the noise level is very low (figures \ref{fig:3} and \ref{fig:4}). 

Figure \ref{fig:5} shows the reconstruction images of the true conductivity in the first experiment (aka. the first picture of Figure \ref{fig:1}) when different minimization algorithms are applied to solve the minimization (\ref{prb:4}).  Under the constraint $0 \le \kappa \le \sum_{k=1}^P \min({\it a},\beta^\delta_k)\chi_k$ and $5\%$ relative noise, the reconstruction image using \texttt{cvx} looks perfectly well and contains no ringing artifact at all; while the \texttt{MATLAB} built-in function \texttt{quadprog} with \texttt{trust-region-reflective} Algorithm also yields very good result that reduces a lot of ringing artifacts compared with the standard Tikhonov approach (the third picture from the left of Figure \ref{fig:2}). However, the \texttt{MATLAB} built-in function \texttt{quadprog} with default option (in this case \texttt{interior-point-convex} Algorithm) totally fails to produce an approximation of the true conductivity. The minimum values and the amount of time taken when solving the minimization problem (\ref{prb:4}) to produce pictures in Figure \ref{fig:5} are shown in Table 1. We believe that the \texttt{trust-region-reflective} Algorithm can be optimized for real-time implementation.

\begin{table}\label{tab:1}
\caption{\label{arttype} Minimum value and runtime of pictures in Figure \ref{fig:5}.}
\footnotesize\rm
\begin{tabular*}{\textwidth}{@{}l*{15}{@{\extracolsep{0pt plus12pt}}l}}
\br
Algorithm&Minimum value&Runtime (second)\\
\mr
\texttt{cvx}&0.0126& 4818.4263\\
\texttt{trust-region-reflective}&0.0131&235.2102\\
\texttt{interior-point-convex}&0.2439&67.2561\\
\br
\end{tabular*}
\end{table}

The numerical experiments confirm that minimizing the residual of the linearized EIT equation under the constraint $0 \le \kappa \le \sum_{k=1}^P \min({\it a},\beta_k) \chi_k$ yields good approximations to the true conductivity change. The algorithm yields no artifacts and produces good shape reconstructions even under high levels of noise.  We also expect the same results for the complete electrode model setting.

All of the above arguments hold if we replace the Frobenius norm in (\ref{29.10.1}) by the operator norm. However, the numerical results with respect to the Frobenius norm are nicer (the last picture of Figures \ref{fig:2} and the first picture of Figure \ref{fig:5}).




\section{Conclusions}\label{Sec:con}
The popularly used reconstruction methods based on minimizing the usual linearized EIT equation are simple and fast for real-time implementation and produce good reconstruction images. However, these methods have no rigorous convergence results, and the reconstruction images usually contain ringing artifacts. On the other hand, monotonicity-based methods allow globally convergent implementation but usually produce bad images under high levels of noise or when real data are used. Our method is a combination of the usual minimization problem of the linearized EIT equation and the monotonicity-based method, which inherits most of the good properties of these two methods, such as stability under high noise and rigorous global convergence property. {{Besides, if the lower and upper bounds of the conductivity are known, all parameters of our method can be calculated a-priori.} }Moreover, to the best of our knowledge, this is the first reconstruction method based on minimizing the residual which has a rigorous global convergence property. However, we would admit that our method requires the definiteness assumption, that is, the true conductivity should be either always bigger  or always smaller than the reference conductivity over all the reference body. 

 In this paper, we establish rigorously theoretical results of our method and provide a few numerical experiments in an idealistic setting, i.e. the continuum model setting. In future works, we will apply this method to more realistic models such as the shunt model or the complete electrode model, which are commonly used models in practice.


\ack
The authors would like to thank Marcel Ullrich, Dominik Garmatter and Janosch Rieger for many fruitful discussions. MNM is also grateful to Li-Chang Hung and Professor Giovanni Alberti for their constant support. 

\section*{References}


\begin{thebibliography}{10}

\bibitem{adler2009greit}
A.~Adler, J.~H. Arnold, R.~Bayford, A.~Borsic, B.~Brown, P.~Dixon, T.~J. Faes,
  I.~Frerichs, H.~Gagnon, Y.~G{\"a}rber, et~al.
\newblock {GREIT}: a unified approach to 2{D} linear {EIT} reconstruction of
  lung images.
\newblock {\em Physiological measurement}, 30(6):S35, 2009.

\bibitem{astala2006calderon}
K.~Astala and L.~P{\"a}iv{\"a}rinta.
\newblock Calder{\'o}n's inverse conductivity problem in the plane.
\newblock {\em Ann. of Math.}, pages 265--299, 2006.

\bibitem{aykroyd2006conditional}
R.~G. Aykroyd, M.~Soleimani, and W.~R. Lionheart.
\newblock Conditional {B}ayes reconstruction for {ERT} data using resistance
  monotonicity information.
\newblock {\em Measurement Science and Technology}, 17(9):2405, 2006.

\bibitem{azzouz2007factorization}
M.~Azzouz, M.~Hanke, C.~Oesterlein, and K.~Schilcher.
\newblock The factorization method for electrical impedance tomography data
  from a new planar device.
\newblock {\em International Journal of Biomedical Imaging}, 2007.

\bibitem{bhatia1997matrix}
R.~Bhatia.
\newblock {\em Matrix analysis}.
\newblock Springer-Verlag, New York, 1997.

\bibitem{cal80}
A.-P. Calder\'on.
\newblock On an inverse boundary value problem.
\newblock {\em Soc. Brazil. Mat.}, pages 65--73, 1980.

\bibitem{cheney1990noser}
M.~Cheney, D.~Isaacson, J.~Newell, S.~Simske, and J.~Goble.
\newblock {NOSER}: {A}n algorithm for solving the inverse conductivity problem.
\newblock {\em International Journal of Imaging Systems and Technology},
  2(2):66--75, 1990.

\bibitem{cherepenin20013d}
V.~Cherepenin, A.~Karpov, A.~Korjenevsky, V.~Kornienko, A.~Mazaletskaya,
  D.~Mazourov, and D.~Meister.
\newblock A 3{D} electrical impedance tomography ({EIT}) system for breast
  cancer detection.
\newblock {\em Physiological measurement}, 22(1):9, 2001.

\bibitem{douglas2012banach}
R.~G. Douglas.
\newblock {\em Banach algebra techniques in operator theory}, volume 179.
\newblock Springer Science \& Business Media, 2012.

\bibitem{frerichs2000electrical}
I.~Frerichs.
\newblock Electrical impedance tomography ({EIT}) in applications related to
  lung and ventilation: a review of experimental and clinical activities.
\newblock {\em Physiological measurement}, 21(2):R1, 2000.

\bibitem{garde2015convergence}
H.~Garde and S.~Staboulis.
\newblock Convergence and regularization for monotonicity-based shape
  reconstruction in electrical impedance tomography.
\newblock {\em arXiv preprint arXiv:1512.01718}, 2015.

\bibitem{Geb08}
B.~Gebauer.
\newblock Localized potentials in electrical impedance tomography.
\newblock {\em Inverse Probl. Imaging}, 2:251--269, 2008.

\bibitem{GH07}
B.~Gebauer and N.~Hyv{\"o}nen.
\newblock Factorization method and irregular inclusions in electrical impedance
  tomography.
\newblock {\em Inverse Problems}, 23:2159--2170, 2007.

\bibitem{gehre2012sparsity}
M.~Gehre, T.~Kluth, A.~Lipponen, B.~Jin, A.~Sepp{\"a}nen, J.~P. Kaipio, and
  P.~Maass.
\newblock Sparsity reconstruction in electrical impedance tomography: an
  experimental evaluation.
\newblock {\em Journal of Computational and Applied Mathematics},
  236(8):2126--2136, 2012.

\bibitem{gb08}
M.~Grant and S.~Boyd.
\newblock Graph implementations for nonsmooth convex programs.
\newblock In V.~Blondel, S.~Boyd, and H.~Kimura, editors, {\em Recent Advances
  in Learning and Control}, Lecture Notes in Control and Information Sciences,
  pages 95--110. Springer-Verlag Limited, 2008.

\bibitem{cvx}
M.~Grant and S.~Boyd.
\newblock {CVX}: Matlab software for disciplined convex programming, version
  2.1.
\newblock {http://cvxr.com/cvx}, Mar. 2014.

\bibitem{scherzer2011handbook}
M.~Hanke and A.~Kirsch.
\newblock Sampling methods.
\newblock In O.~Scherzer, editor, {\em Handbook of Mathematical Models in
  Imaging}, pages 501--550. Springer, 2011.

\bibitem{harrach2013recent}
B.~Harrach.
\newblock Recent progress on the factorization method for electrical impedance
  tomography.
\newblock {\em Computational and mathematical methods in medicine}, 2013.

\bibitem{Ha15}
B.~Harrach.
\newblock Interpolation of missing electrode data in electrical impedance
  tomography.
\newblock {\em Inverse Problems}, 31(11):115008, 2015.

\bibitem{harrach2010exact}
B.~Harrach and J.~K. Seo.
\newblock Exact shape-reconstruction by one-step linearization in electrical
  impedance tomography.
\newblock {\em SIAM Journal on Mathematical Analysis}, 42(4):1505--1518, 2010.

\bibitem{harrach2010factorization}
B.~Harrach, J.~K. Seo, and E.~J. Woo.
\newblock Factorization method and its physical justification in
  frequency-difference electrical impedance tomography.
\newblock {\em Medical Imaging, IEEE Transactions on}, 29(11):1918--1926, 2010.

\bibitem{harrach2013monotonicity}
B.~Harrach and M.~Ullrich.
\newblock Monotonicity-based shape reconstruction in electrical impedance
  tomography.
\newblock {\em SIAM Journal on Mathematical Analysis}, 45(6):3382--3403, 2013.

\bibitem{jin2012reconstruction}
B.~Jin, T.~Khan, and P.~Maass.
\newblock A reconstruction algorithm for electrical impedance tomography based
  on sparsity regularization.
\newblock {\em International Journal for Numerical Methods in Engineering},
  89(3):337--353, 2012.

\bibitem{jin2012sparsity}
B.~Jin and P.~Maass.
\newblock Sparsity regularization for parameter identification problems.
\newblock {\em Inverse Problems}, 28(12):123001, 2012.

\bibitem{Kirsch_book11}
A.~Kirsch.
\newblock {\em An introduction to the mathematical theory of inverse problems},
  volume 120.
\newblock Springer Science \& Business Media, 2011.

\bibitem{kittaneh1986inequalities}
F.~Kittaneh.
\newblock Inequalities for the {S}chatten p-norm. iv.
\newblock {\em Communications in mathematical physics}, 106(4):581--585, 1986.

\bibitem{Koh84}
R.~Kohn and M.~Vogelius.
\newblock Determining {C}onductivity by {B}oundary {M}easurements.
\newblock {\em Comm. Pure Appl. Math.}, 37:289--298, 1984.

\bibitem{lazarovitch2002experimental}
R.~Lazarovitch, D.~Rittel, and I.~Bucher.
\newblock Experimental crack identification using electrical impedance
  tomography.
\newblock {\em NDT \& E International}, 35(5):301--316, 2002.

\bibitem{Lec08}
A.~Lechleiter and A.~Rieder.
\newblock Newton regularizations for impedance tomography: convergence by local
  injectivity.
\newblock {\em Inverse Problems}, 24:065009 (18pp), 2008.

\bibitem{MuSa12}
J.~L. Mueller and S.~Siltanen.
\newblock {\em Linear and Nonlinear Inverse Problems with Practical
  Applications}.
\newblock SIAM, Philadelphia, 2012.

\bibitem{Nac96}
A.~I. Nachman.
\newblock Global uniqueness for a two-dimensional inverse boundary value
  problem.
\newblock {\em Ann. of Math.}, 143:71--96, 1996.

\bibitem{parker1984inverse}
R.~L. Parker.
\newblock The inverse problem of resistivity sounding.
\newblock {\em Geophysics}, 49(12):2143--2158, 1984.

\bibitem{Syl87}
J.~Sylvester and G.~Uhlmann.
\newblock A global uniqueness theorem for an inverse boundary value problem.
\newblock {\em Ann. of Math.}, 125:153--169, 1987.

\bibitem{tamburrino2006monotonicity}
A.~Tamburrino.
\newblock Monotonicity based imaging methods for elliptic and parabolic inverse
  problems.
\newblock {\em Journal of Inverse and Ill-posed Problems}, 14(6):633--642,
  2006.

\bibitem{tamburrino2002new}
A.~Tamburrino and G.~Rubinacci.
\newblock A new non-iterative inversion method for electrical resistance
  tomography.
\newblock {\em Inverse Problems}, 18(6):1809, 2002.

\bibitem{thanh2014reconstruction}
N.~T. Thanh, L.~Beilina, M.~V. Klibanov, and M.~A. Fiddy.
\newblock Reconstruction of the refractive index from experimental
  backscattering data using a globally convergent inverse method.
\newblock {\em SIAM Journal on Scientific Computing}, 36(3):B273--B293, 2014.

\bibitem{tidswell2001three}
T.~Tidswell, A.~Gibson, R.~H. Bayford, and D.~S. Holder.
\newblock Three-dimensional electrical impedance tomography of human brain
  activity.
\newblock {\em NeuroImage}, 13(2):283--294, 2001.

\bibitem{young1988introduction}
N.~Young.
\newblock {\em An introduction to Hilbert space}.
\newblock Cambridge University Press, 1988.

\bibitem{zhou2015monotonicity}
L.~Zhou, B.~Harrach, and J.~K. Seo.
\newblock Monotonicity-based electrical impedance tomography lung imaging.
\newblock {\em preprint}, 2015.

\bibitem{zhou2001three}
Q.~Y. Zhou, J.~Shimada, and A.~Sato.
\newblock Three-dimensional spatial and temporal monitoring of soil water
  content using electrical resistivity tomography.
\newblock {\em Water Resources Research}, 37(2):273--285, 2001.

\end{thebibliography}

\end{document}